\begin{document}

%\linenumbers

\begin{frontmatter}

\titledata{Accordion graphs: Hamiltonicity, matchings and isomorphism with quartic circulants}{Dedicated to Professor Anton Buhagiar (1954--2020) --- a brilliant mathematician, a passionate musician, a dedicated teacher, a cheerful friend.}           % title of the paper

\authordata{John Baptist Gauci}
{Department of Mathematics, University of Malta, Malta}
{john-baptist.gauci@um.edu.mt}
{}

\authordata{Jean Paul Zerafa}
{Dipartimento di Scienze Fisiche, Informatiche e Matematiche\\ Universit\`{a} degli Studi di Modena e Reggio Emilia, Italy;\\
Department of Technology and Entrepreneurship Education, University of Malta, Malta;\\
Department of Computer Science, Faculty of Mathematics, Physics and Informatics\\ Comenius University, Mlynsk\'{a} Dolina, 842 48 Bratislava, Slovakia}
{zerafa.jp@gmail.com}
{}

\keywords{Hamiltonian cycle, perfect matching, quartic graph, circulant graph.}
\msc{05C70, 05C45, 05C60.}

\begin{abstract}
Let $G$ be a graph of even order and let $K_{G}$ be the complete graph on the same vertex set of $G$. A pairing of a graph $G$ is a perfect matching of the graph $K_{G}$. A graph $G$ has the Pairing-Hamiltonian property (for short, the PH-property) if for each one of its pairings, there exists a perfect matching of $G$ such that the union of the two gives rise to a Hamiltonian cycle of $K_G$. In 2015, Alahmadi \emph{et al.} gave a complete characterisation of the cubic graphs having the PH-property. Most naturally, the next step is to characterise the quartic graphs that have the PH-property. In this work we propose a class of quartic graphs on two parameters, $n$ and $k$, which we call the class of accordion graphs $A[n,k]$. We show that an infinite family of quartic graphs (which are also circulant) that Alahmadi \emph{et al.} stated to have the PH-property are, in fact, members of this general class of accordion graphs. We also study the PH-property of this class of accordion graphs, at times considering the pairings of $G$ which are also perfect matchings of $G$. Furthermore, there is a close relationship between accordion graphs and the Cartesian product of two cycles. Motivated by a recent work by Bogdanowicz (2015), we give a complete characterisation of those accordion graphs that are circulant graphs. In fact, we show that $A[n,k]$ is not circulant if and only if both $n$ and $k$ are even, such that $k\geq 4$.
\end{abstract}

\end{frontmatter}
\section{Introduction}
The graphs considered in this paper are undirected and simple (without loops or multiple edges). A graph $G$ having vertex set $V(G)$ of cardinality $n$ and edge set $E(G)$ has a \emph{perfect matching} if there is a set of exactly $\frac{n}{2}$ edges of $G$ which have all the $n$ vertices of $G$ as endvertices. This tacitly implies that a necessary condition for a graph to have a perfect matching is that it has an even number of vertices. Let $k$ be a non-negative integer. A \emph{path} on $k$ vertices, denoted by $P_{k}$, is a sequence of pairwise distinct vertices $v_1,\ldots,v_{k}$ with corresponding edge set $\{v_{i}v_{i+1}:i\in \{1,\ldots,k-1\}\}$ (if $k>1$). For $k\geq 3$, a \emph{cycle} of length $k$ (or a $k$-cycle), denoted by $C_{k}=(v_{1}, \ldots, v_{k})$, is a sequence of mutually distinct vertices $v_1,v_2,\ldots,v_{k}$ with corresponding edge set $\{v_{1}v_{2}, \ldots, v_{k-1}v_{k}, v_{k}v_{1}\}$. A \emph{Hamiltonian cycle (path)} of a graph $G$ is a cycle (path) in $G$ that visits all the vertices of $G$ exactly once. We refer the reader to \cite{Diestel} for definitions and notation not explicitly stated here.

The two main concepts of this paper, namely Hamiltonicity and perfect matchings, have been extensively studied in literature by various authors and in various settings. The study of both together has recently been put in the limelight in \cite{ThomassenEtAl}, where the authors ask whether a perfect matching of the complete graph $K_G$ on the same vertex set as a graph $G$, called a \emph{pairing} of $G$, can be \emph{extended} to a Hamiltonian cycle of $K_G$ by using only edges of $G$. Equivalently, given a pairing $M$ of $G$, we say that $M$ can be extended to a Hamiltonian cycle $H$ of $K_G$ if we can find another perfect matching $N$ of $G$ such that $M\cup N=E(H)$, where $E(H)$ is the corresponding edge set of the Hamiltonian cycle $H$. By using this terminology, the authors of \cite{ThomassenEtAl} define a graph $G$ on an even number of vertices as having the \emph{Pairing-Hamiltonian property} (or, for short, the \emph{PH-property}) if every pairing $M$ of $G$ can be extended to a Hamiltonian cycle $H$ of $K_G$. On a similar flavour, the authors of \cite{PMHAbreuEtAl} define the \emph{Perfect-Matching-Hamiltonian property} (or, for short, the \emph{PMH-property}) for a graph $G$ if every perfect matching of $G$ can be extended to a Hamiltonian cycle of $K_{G}$, which in this case, would also be a Hamiltonian cycle of $G$ itself. For simplicity, if a graph $G$ admits the PMH-property we shall say that $G$ is \emph{PMH}. It can be easily seen that if a graph does not have the PMH-property, then it surely does not have the PH-property, although the converse is not true. We remark that the PMH-property was already studied in the 1970s by H\"{a}ggkvist and Las Vergnas (see \cite{haggkvist, lasvergnas}).

A complete characterisation of the cubic graphs having the PH-property was given in \cite{ThomassenEtAl}, and thus the most obvious next step would be to characterise $4$-regular graphs which have the PH-property. This endeavour proved to be more elusive, and thus far a complete characterisation of such quartic graphs remains unknown. In an attempt to advance in this direction, in Section \ref{Sect-AccGphs}, we define a class of graphs on two parameters, $n$ and $k$, which we call the class of \emph{accordion graphs} $A[n,k]$. This class presents a natural generalisation of the well-known antiprism graphs, and of a class of graphs which is known to have the PH-property, corresponding to $A[n,1]$ and $A[n,2]$, respectively. In the same section, we discuss some fundamental properties and characteristics of accordion graphs and, in particular, we see that accordion graphs can be drawn in a grid-like manner, which resembles a drawing of the Cartesian product of two cycles $C_{n_{1}}\square C_{n_2}$, for appropriate cycle lengths $n_{1}$ and $n_{2}$. In Section \ref{Sect-PH} we prove that all antiprism graphs have the PMH-property but only four of them also have the PH-property. In the same section, we provide a proof that $A[n,2]$ has the PH-property, which result, although known to be communicated by the authors of \cite{ThomassenEtAl}, has no published proof. These encouraging outcomes motivate our proposal of the class of accordion graphs as a possible candidate for graphs having the PMH-property and/or the PH-property. Empirical evidence suggests that, apart from the above mentioned, there are (possibly an infinite number of) other accordion graphs which have the PMH-property, and possibly some of them even have the PH-property, but a proof for this is currently unavailable. In Section \ref{Sect-NotPMH}, by extending an argument introduced in \cite{CpCq}, we show that we can exclude some graphs $A[n,k]$ from this search for graphs having the PMH- and/or the PH-property. In fact, we prove that the graphs $A[n,k]$ for which the greatest common divisor of $n$ and $k$ is at least $5$ do not have the PMH-property. The technique used does not seem to lend itself when coming to show whether the remaining accordion graphs have, or do not have, the PH-property or the PMH-property. 

In 2015, Bogdanowicz \cite{CpCqCirculant} gave all possible values of $n_{1}$ and $n_{2}$ for which the graph $C_{n_{1}}\square C_{n_2}$ is a circulant graph, namely when $\gcd(n_{1},n_{2})=1$. Due to the similarity between the two classes of graphs, in Section \ref{Sect-Circulant}, we give a complete characterisation of which accordion graphs are circulant graphs. We finally pose some related questions and open problems in Section \ref{Sect-Concl}.

\section{Accordion graphs} \label{Sect-AccGphs}

In the sequel, operations (including addition and subtraction) in the indices of the vertices $u_{i}$ and $v_{i}$ in an accordion graph (as in the following definition) are taken modulo $n$, with complete residue system $[n]=\{1,\ldots, n\}$. For simplicity, and unless there is room for confusion, we shall omit writing $\pmod{n}$ when referring to the indices of $u_i$ and $v_i$.

\begin{definition} \label{def AccordionGraphs}
Let $n$ and $k$ be integers such that  $n\geq 3$ and $0<k\le\frac{n}{2}$. The \emph{accordion graph} $A[n,k]$  is the quartic graph with vertices $\{u_1, u_2,\ldots, u_{n}, v_1, v_2,\ldots , v_{n}\}$ such that the edge set consists of the edges
\begin{linenomath}
$$\{u_{i}u_{i+1},v_{i}v_{i+1},u_iv_i,u_iv_{i+k}: i\in[n]\}.$$
\end{linenomath}
The edges $u_{i}u_{i+1}$ and $v_{i}v_{i+1}$ are called the \emph{outer-cycle edges} and the \emph{inner-cycle edges}, respectively, or simply the \emph{cycle edges}, collectively; and the edges $u_iv_i$ and $u_iv_{i+k}$ are called the \emph{vertical spokes} and the \emph{diagonal spokes}, respectively, or simply the \emph{spokes}, collectively. For simplicity, we sometimes refer to the accordion graph $A[n,k]$ as the accordion $A[n,k]$.
\end{definition}

\begin{figure}[h]
\centering
\includegraphics[width=.603\textwidth]{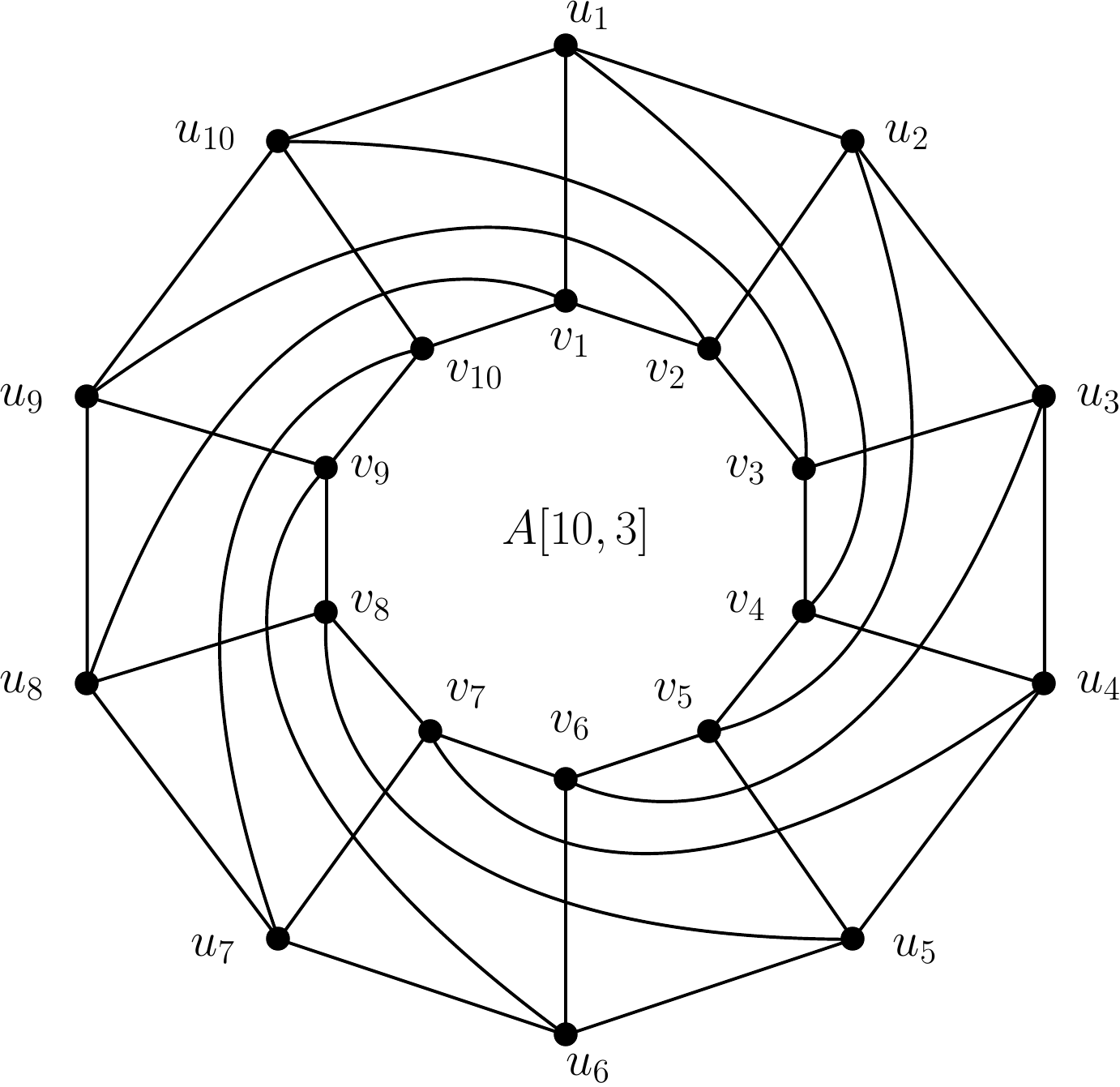}
\caption{The accordion graph $A[10,3]$}
\end{figure}

An observation that will prove to be useful in the sequel revolves around the greatest common divisor of $n$ and $k$, denoted by $\gcd(n,k)$. However, before proceeding further, we give the definition of the Cartesian product of graphs. The \emph{Cartesian product} $G\square H$ of two graphs $G$ and $H$ is a graph whose vertex set is the Cartesian product $V(G) \times V(H)$ of $V(G)$ and $V(H)$. Two vertices $(u_i,v_j)$ and $(u_k,v_l)$ are adjacent precisely if  $u_i=u_k$ and $v_jv_l\in E(H)$ or $u_iu_k \in E(G)$ and $v_j=v_l$.

\begin{remark}\label{RemarkDrawing}
The graph obtained from $A[n,k]$ by deleting the edges
%\begin{linenomath}
$$\{u_{tq}u_{tq+1}, v_{tq}v_{tq+1} :  q=\gcd(n,k)  \textrm{ and }  t\in\{1,\ldots,\tfrac{n}{q}\}\}$$
%\end{linenomath}
is isomorphic to the Cartesian product $C_{\frac{2n}{q}} \square P_{q}$. This can be easily deduced by an appropriate drawing of $A[n,k]$, as shown in Figure \ref{Figure AccCart} for the case when $k=5$ and $\gcd(n,k)=5$. Thus, any perfect matching of $C_{\frac{2n}{q}} \square P_{q}$ is also a perfect matching of $A[n,k]$, although the converse is trivially not true.
\end{remark}

\begin{figure}[h]
\centering
\includegraphics[width=.7\textwidth]{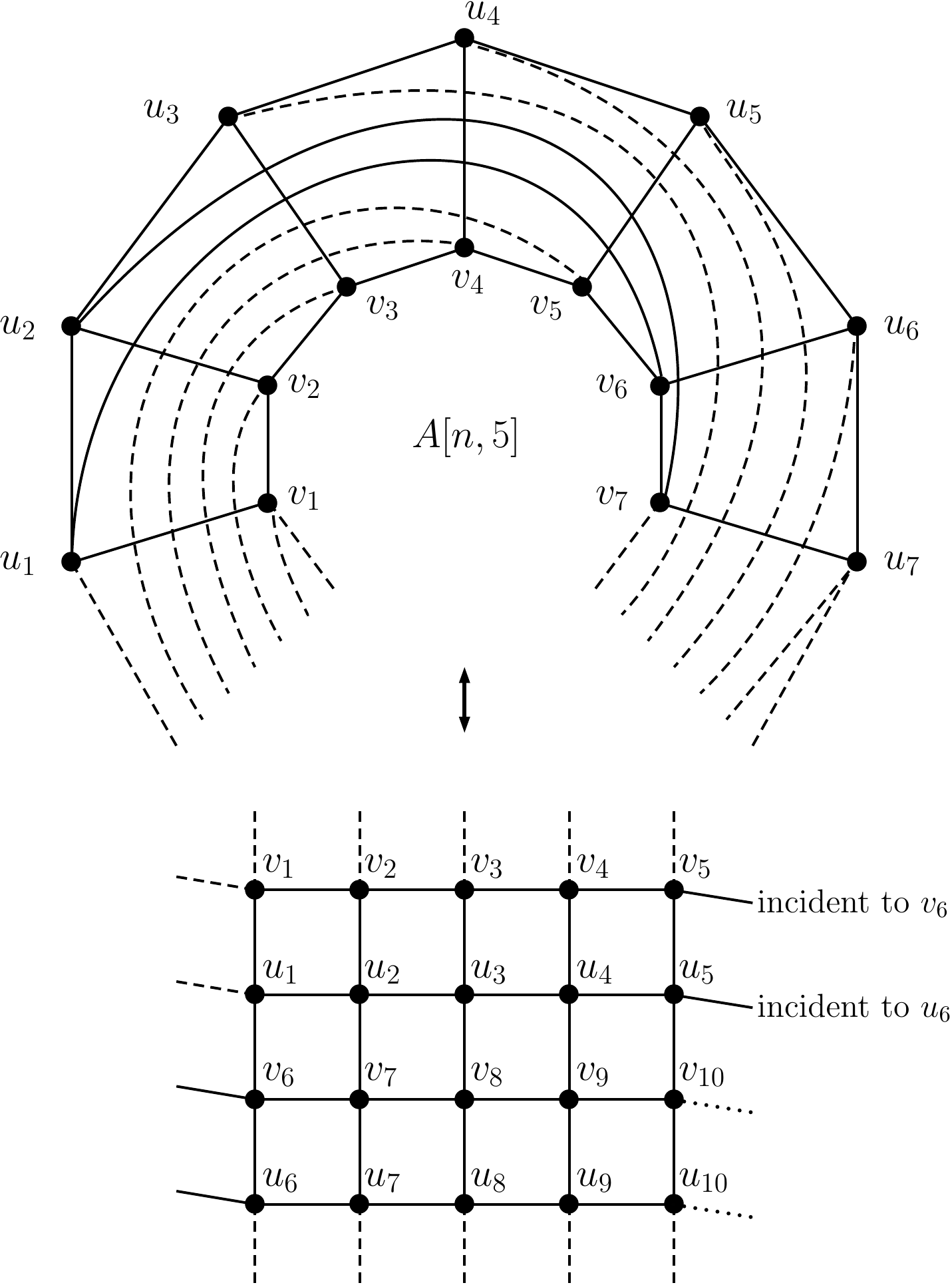}
\caption{Two different drawings of $A[n,k]$ when $k=5$ and $\gcd(n,k)=5$}
\label{Figure AccCart}
\end{figure}

\section{The accordion graph $A[n,k]$ when $k\leq 2$} \label{Sect-PH}

\subsection{$A[n,1]$}
As already mentioned above,  the accordion graph $A[n,1]$ is isomorphic to the widely known antiprism graph $A_n$ on $2n$ vertices.  Let $M$ be a perfect matching of $A_{n}$. We note that, if $M$ contains at least one vertical spoke, then no diagonal spoke can be contained in $M$, and if any inner-cycle edges are in $M$, then the outer-cycle edges having the same indices must also belong to $M$. A similar argument can be made if $M$ contains diagonal spokes. Thus, for every $i,j\in[n]$, $u_{i}v_{i}\in M$ or $\{u_{i}u_{i+1},v_{i}v_{i+1}\}\subset M$ if and only if  $u_{j}v_{j+1}\not\in M$ or $\{u_{j}u_{j+1}, v_{j+1}v_{j+2}\}\not\subset M$. This can be summarised in the following remark.

\begin{remark}\label{Remark PM in An}
Let $M$ be a perfect matching of $A_{n}$. Then, $M$ is either a perfect matching of $A_{n}-\{u_{i}v_{i}:i\in[n]\}$ or of $A_{n}-\{u_{i}v_{i+1}:i\in[n]\}$.
\end{remark}

Consequently, in what follows, without loss of generality, we only consider perfect matchings of $A_{n}$ containing spokes of the type $u_{i}v_{i}$, that is, vertical spokes, and if a perfect matching contains the edge $u_{j}u_{j+1}$, then it must also contain the edge $v_{j}v_{j+1}$.

\begin{theorem}\label{Theorem AnPMH}
The antiprism $A_{n}$ has the PMH-property.
\end{theorem}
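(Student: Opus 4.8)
The plan is to build on the normalisation already in place: by Remark~\ref{Remark PM in An} and the discussion following it, we may assume that the given perfect matching $M$ of $A_{n}$ contains only vertical spokes among its spokes and that $u_{j}u_{j+1}\in M$ forces $v_{j}v_{j+1}\in M$. I would then extend \emph{every} such $M$ by the \emph{same} explicit second matching, namely the set of all diagonal spokes $N=\{u_{i}v_{i+1}:i\in[n]\}$. Since $N$ is plainly a perfect matching of $A_{n}$ (each $u_{i}$ is covered by $u_{i}v_{i+1}$ and each $v_{j}$ by $u_{j-1}v_{j}$) and $M$ contains no diagonal spoke, the two matchings are edge-disjoint, so $M\cup N$ is a $2$-regular spanning subgraph of $A_{n}$. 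Everything then reduces to showing that $M\cup N$ is connected: a connected $2$-regular spanning subgraph is a Hamiltonian cycle of $A_{n}$, hence of $K_{A_{n}}$, and it extends $M$ by construction.

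To establish connectivity I would contract the matching $N$, identifying $u_{i}$ with $v_{i+1}$ into a single vertex $x_{i}$ for each $i\in[n]$; every vertex of $A_{n}$ then lies in exactly one $x_{i}$ (with $u_{i}\in x_{i}$ and $v_{j}\in x_{j-1}$), and since the edges of $N$ are pairwise vertex-disjoint, contracting all of them does not change the number of connected components. A one-line check of the three possible kinds of $M$-edge shows that after the contraction every edge of $M$ joins $x_{j}$ to $x_{j+1}$ for some $j$: the vertical spoke $u_{i}v_{i}$ joins $x_{i-1}$ to $x_{i}$, the outer edge $u_{i}u_{i+1}$ joins $x_{i}$ to $x_{i+1}$, and the inner edge $v_{i}v_{i+1}$ joins $x_{i-1}$ to $x_{i}$. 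Hence the contracted graph $G'$ has vertex set $\{x_{1},\ldots,x_{n}\}$, has exactly $n$ edges (one for each edge of $M$, none of them a loop), and all of its edges lie in the $n$-element set $\{x_{j}x_{j+1}:j\in[n]\}$; that is, $G'$ is a subgraph of the cycle $C_{n}$ on $x_{1},\ldots,x_{n}$.

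The crux is to rule out multiple edges in $G'$: once this is done, $G'$ is a subgraph of $C_{n}$ with $n$ edges, hence equals $C_{n}$, which is connected, and we are finished. Fix $j$; the only $M$-edges that can map to the pair $\{x_{j},x_{j+1}\}$ are the vertical spoke $u_{j+1}v_{j+1}$, the outer edge $u_{j}u_{j+1}$, and the inner edge $v_{j+1}v_{j+2}$. The first two meet at $u_{j+1}$ and the first and third meet at $v_{j+1}$, so no two of these three can lie together in a matching; and $u_{j}u_{j+1}$ and $v_{j+1}v_{j+2}$ cannot both be in $M$ either, because $u_{j}u_{j+1}\in M$ forces $v_{j}v_{j+1}\in M$ by our normal form, and $v_{j}v_{j+1}$ already meets $v_{j+1}v_{j+2}$ at $v_{j+1}$. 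Thus at most one edge of $M$ maps to each $\{x_{j},x_{j+1}\}$, as required.

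The main obstacle is precisely this last exclusion: it is essential to invoke the equivalence $u_{j}u_{j+1}\in M\Leftrightarrow v_{j}v_{j+1}\in M$ from Remark~\ref{Remark PM in An}, for otherwise an outer edge and an inner edge of $M$ with interlocking indices would collapse to a doubled edge of $G'$ and $M\cup N$ could break up into two or more disjoint cycles. The remaining ingredients — that $N$ is a perfect matching, that contracting a vertex-disjoint matching preserves connected components, and the bookkeeping of which $x_{i}$ contains which original vertex — are routine.
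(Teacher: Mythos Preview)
Your argument is correct and genuinely different from the paper's. The paper proceeds by induction on $n$: it cites the base case $A_{3}$, then for a perfect matching $M$ of $A_{n+1}$ containing the cycle-edge pair $\{u_{n}u_{n+1},v_{n}v_{n+1}\}$ it passes to $M'=M\cup\{u_{n}v_{n}\}-\{u_{n}u_{n+1},v_{n}v_{n+1}\}$ in $A_{n}$, obtains a Hamiltonian cycle $H'$ by the inductive hypothesis, and then performs a case analysis on the local structure of $H'$ near $u_{n},v_{n}$ to reinsert $u_{n+1},v_{n+1}$. Your route avoids induction entirely: you fix the single complementary matching $N=\{u_{i}v_{i+1}:i\in[n]\}$ and show directly that $M\cup N$ is one cycle by contracting $N$ and recognising the image of $M$ as exactly the cycle $C_{n}$. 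The key step---ruling out that $u_{j}u_{j+1}$ and $v_{j+1}v_{j+2}$ both lie in $M$ by invoking the normalisation $u_{j}u_{j+1}\in M\Rightarrow v_{j}v_{j+1}\in M$---is precisely where Remark~\ref{Remark PM in An} is used, and without it the argument would indeed fail. Your approach is shorter, yields an explicit extending matching uniformly for all $M$ of the normalised type, and sidesteps the local case analysis of the inductive step; the paper's approach, by contrast, does not commit to a single $N$ and instead manufactures the Hamiltonian cycle incrementally.
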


\begin{proof}
Let $M$ be a perfect matching of $A_{n}$. Consider first the case when $M=\{u_{i}v_{i}: i\in[n]\}$. It is easy to see that $(v_{1},u_{1},v_{2},u_{2},\ldots,v_{n-1}, u_{n-1}, v_{n}, u_{n})$ is a Hamiltonian cycle of $A_{n}$ containing $M$. So assume that $M$ does not consist of only vertical spokes. Without loss of generality, we can assume that $M$ contains the edges $u_{n}u_{n+1}$ and $v_{n}v_{n+1}$, by Remark \ref{Remark PM in An}. We proceed by induction on $n$. The antiprism $A_{3}$ was already shown to be PMH in \cite{PMHAbreuEtAl}, since $A_{3}$ is the line graph of the complete graph $K_{4}$. So assume result is true up to $n\geq 3$, and consider $A_{n+1}$ and a perfect matching $M$ of $A_{n+1}$. Let $M'$ be $M\cup \{u_{n}v_{n}\}-\{u_{n}u_{n+1}, v_{n}v_{n+1}\}$. Then, $M'$ is a perfect matching of $A_{n}$, and so, by the inductive step, there exists a Hamiltonian cycle $H'$ of $A_{n}$ which contains $M'$. We next show that $H'$ can be extended to a Hamiltonian cycle $H$ of $A_{n+1}$ containing $M$ by considering each of the following possible induced paths in $H'$ and replacing them as indicated hereunder:
\begin{enumerate}[(i)]
\item $u_{n-1}, u_{n}, v_{n}, v_{n-1}$ is replaced by $u_{n-1}, u_{n}, u_{n+1}, v_{n+1}, v_{n}, v_{n-1}$ (similarly $u_{1}, u_{n}, v_{n}, v_{1}$ is replaced by $u_{1}, u_{n+1}, u_{n}, v_{n}, v_{n+1}, v_{1}$);
\item $u_{n-1}, v_{n}, u_{n}, u_{1}$ is replaced by $u_{n-1}, v_{n}, v_{n+1}, u_{n}, u_{n+1}, u_{1}$ (similarly $v_{n-1}, v_{n}, u_{n}, v_{1}$ is replaced by $v_{n-1}, v_{n}, v_{n+1}, u_{n}, u_{n+1}, v_{1}$); and
\item $u_{n-1}, u_{n}, v_{n}, v_{1}$ or $u_{n-1}, v_{n}, u_{n}, v_{1}$ are replaced by $u_{n-1}, v_{n}, v_{n+1}, u_{n}, u_{n+1},  v_{1}$ (similarly $v_{n-1}, v_{n}, u_{n}, u_{1}$ is replaced by $v_{n-1}, v_{n}, v_{n+1}, u_{n}, u_{n+1}, u_{1}$).
\end{enumerate}
Consequently, $A_{n+1}$ has the PMH-property, proving our theorem.
\end{proof}

\begin{theorem}
The only antiprisms having the PH-property are $A_{3}, A_{4}, A_{5}$ and $A_{6}$.
\end{theorem}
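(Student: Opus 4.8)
The plan is to establish the theorem in two parts: (a) that $A_3,A_4,A_5,A_6$ have the PH-property, and (b) that $A_n$ fails the PH-property for every $n\geq 7$.

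For part (a) I would proceed by a finite verification. A graph on $2n$ vertices has exactly $(2n-1)!!$ pairings, which equals $15,105,945,10395$ for $n=3,4,5,6$, and the automorphism group of $A_n$ (of order at least $4n$) acts on these pairings, so in each case only a modest number of inequivalent pairings $M$ need to be examined; for each such $M$ one exhibits a perfect matching $N$ of $A_n$ with $M\cup N$ a Hamiltonian cycle of $K_{A_n}$. For $n=3$ one may instead simply quote that $A_3\cong K_{2,2,2}$, which is already known to have the PH-property. This part is therefore essentially bookkeeping, aided if convenient by a computer.

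The heart of the theorem is part (b). The main tool is the severe restriction on the perfect matchings of an antiprism recorded in Remark~\ref{Remark PM in An}: every perfect matching $N$ of $A_n$ is of ``vertical type'' or of ``diagonal type'', and in either case it consists of spokes of one fixed type together with pairs of cycle edges indexed by a set $J$ of rungs containing no two cyclically consecutive elements. It is convenient to pair this with the following reformulation: contracting the $n$ edges of a pairing $M$ turns $A_n$ into a multigraph $A_n/M$ on $n$ super-vertices, and for a perfect matching $N$ of $A_n$ disjoint from $M$ the union $M\cup N$ is a single Hamiltonian cycle of $K_{A_n}$ precisely when $N/M$ is a single $n$-cycle; in particular $M$ cannot be extended unless some Hamiltonian cycle of $A_n/M$ lifts to a perfect matching of $A_n$. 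I would then try, for each $n\geq 7$, to build a pairing $M_n$ out of a bounded ``core'' pattern carried on a few consecutive rungs, completed on the remaining rungs by edges (for instance vertical spokes) chosen so that they are both easy to track and leave $N$ little room; the argument is to show that any perfect matching $N$ extending $M_n$ is forced, by disjointness from $M_n$ and by the vertical/diagonal dichotomy, into one of finitely many configurations on the core, each of which either produces a short cycle in $M_n\cup N$ or leaves some vertex unvisited. A practical route to the core is to first locate a non-extendable pairing of $A_7$ by exhaustive search and then to check that the same local obstruction persists under the padding for all larger $n$.

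The main obstacle is precisely this: finding the correct core and carrying through the case analysis so that it closes uniformly in $n$. The obvious shortcuts are unavailable, because $A_n$ is $4$-connected and Hamiltonian, so $A_n/M$ is $2$-connected and resists being made non-Hamiltonian by crude connectivity or toughness considerations; the failure of the PH-property for $n\geq 7$ has to be extracted either from the lifting obstruction --- that not every Hamiltonian cycle of $A_n/M$ comes from a perfect matching of $A_n$ --- or from a direct analysis of the vertical/diagonal structure of $N$, and controlling the resulting bookkeeping for all $n$ simultaneously is where the real effort goes.
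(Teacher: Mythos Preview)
Your high-level plan matches the paper's approach: verify $A_3,\dots,A_6$ directly (the paper does $A_3$ by hand, observes that $A_4$ contains $\mathcal{Q}_3$ as a spanning subgraph and invokes Fink's theorem, and dispatches $A_5,A_6$ by computer), and for $n\geq 7$ exhibit a single bad pairing built from a fixed core on a few rungs padded out along the rest of the outer cycle. So strategically you are on the right track.

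Where your proposal remains genuinely incomplete is exactly the place you flag: you have not produced the core, and the machinery you set up to attack it (the contraction $A_n/M$ and the lifting obstruction) turns out to be heavier than what is needed. The paper's argument is short and entirely elementary once the right pairing is written down. The core is the single non-edge $u_1v_5$ together with the three vertical spokes $u_2v_2,u_3v_3,u_4v_4$; the padding consists of \emph{diagonal} spokes $u_iv_{i+1}$ for $i\geq 5$ (so your ``for instance vertical spokes'' is the wrong guess --- mixing the two spoke types in $M$ is precisely what makes the obstruction work). The analysis then never touches the quotient graph or the vertical/diagonal dichotomy for $N$; instead it uses two $3$-edge cuts $\{u_2u_3,u_2v_3,v_2v_3\}$ and $\{u_3u_4,u_3v_4,v_3v_4\}$, each of which $N$ must meet exactly once, to force an $M$-alternating path through $u_2v_2,u_3v_3,u_4v_4$, and then checks that either choice of continuation at the $v_1$-end leaves a $4$-cycle $(u_6,u_7,v_1,v_7)$ or $(u_5,u_6,v_7,v_6)$ stranded. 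This local cut argument is insensitive to $n$, so the same four-line case check covers all $n\geq 7$ simultaneously --- there is no separate ``closes uniformly in $n$'' step to worry about.
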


\begin{proof}
The graph $A_{3}$ is PMH as already explained in Theorem \ref{Theorem AnPMH}, so what is left to show is that every pairing $M$ of $A_{3}$ containing some edge belonging to $E(K_{A_{3}})-E(A_{3})$ (referred to as a non-edge) can be extended to a Hamiltonian cycle of $K_{A_{3}}$. The pairing $M$ can only contain one or three non-edges. When $M$ consists of three non-edges then $M=\{u_{1}v_{3}, u_{2}v_{1},u_{3}v_{2}\}$ and this can be extended to a Hamiltonian circuit of $K_{A_{3}}$ as follows: $(u_{1},v_{3},u_{3},v_{2},u_{2},v_{1})$. Otherwise, assume that the only non-edge in $M$ is $u_{1}v_{3}$, without loss of generality. Then, $M$ is either equal to $\{u_{1}v_{3}, u_{2}v_{2},u_{3}v_{1}\}$ or $\{u_{1}v_{3}, u_{2}u_{3},v_{1}v_{2}\}$, which can be extended to $(u_1,v_3,u_3,v_1,v_2,u_2)$ or $(u_1,v_3,u_3,u_2,v_2,v_1)$, respectively.
%It is an easy exercise to check that, in either case, $M$ can be extended to a Hamiltonian cycle of $K_{A_{3}}$ by using only edges of $A_3$.

The graph $A_{4}$ has the PH-property because it contains the hypercube $\mathcal{Q}_3$ as a spanning subgraph, and by the main theorem in \cite{Fink}, all hypercubes have the PH-property .

An exhaustive computer check was conducted through Wolfram Mathematica to verify that all pairings of the antiprism graphs $A_{5}$ and $A_{6}$ can be extended to a Hamiltonian cycle of the same graphs, thus proving (by brute-force) that they both have the PH-property.

\begin{figure}[h]
\centering
\includegraphics[width=0.53\textwidth]{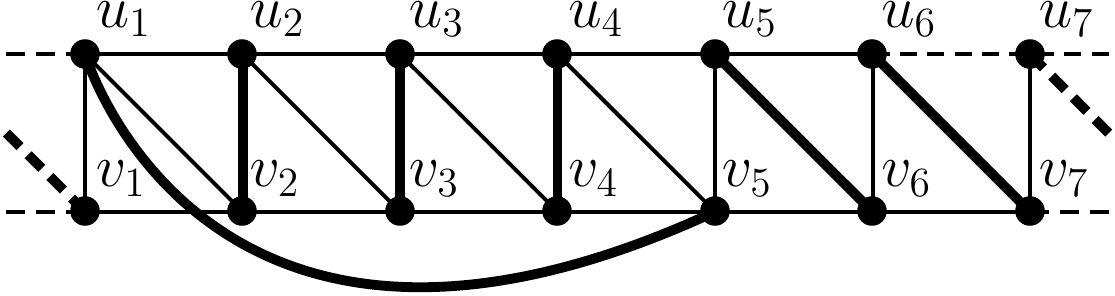}
\caption{A pairing in $A_{n}$, $n\geq 7$, which is not extendable to a Hamiltonian cycle of $K_{A_{n}}$}
\label{FigureA7notPH}
\end{figure}

The antiprism $A_{7}$ does not have the PH-property because the pairing $M=\{u_{1}v_{5}, u_{2}v_{2}, \linebreak u_{3}v_{3}, u_{4}v_{4}, u_{5}v_{6}, u_{6}v_{7}, u_{7}v_{1}\}$, depicted in Figure \ref{FigureA7notPH}, cannot be extended to a Hamiltonian cycle of $K_{A_{7}}$. For, suppose not. Let $N$ be a perfect matching of $A_{7}$ such that $M\cup N$ gives a Hamiltonian cycle of $K_{A_{7}}$. Then, $|N\cap\{u_{2}u_3, u_2v_3,v_2v_3\}|$, and $|N\cap\{u_3u_4, u_3v_4,v_3v_4\}|$ must both be equal to $1$. Consequently, $M\cup N$ induces a $M$-alternating path containing the edges $\{u_{2}v_{2},u_{3}v_{3},u_{4}v_{4}\}$ such that its endvertices are $x\in\{u_{2},v_{2}\}$ and $y\in\{u_{4},v_{4}\}$. If $xu_{1}\in N$, then $N\cap\{u_7u_1, u_1v_1,v_1v_2\}$ is empty, implying that $M\cup N$ induces the $4$-cycle $(u_6, u_7, v_1, v_7)$, a contradiction. Therefore, $N$ contains the edge $v_{2}v_{1}$ implying that $x=v_{2}$. Consequently, $N$ contains the edge $u_7u_{1}$ as well. However, this implies that $N\cap\{u_6u_7, u_7v_7,v_7v_1\}$ is empty, implying that $M\cup N$ induces the $4$-cycle $(u_5, u_6, v_7, v_6)$, a contradiction once again. Thus, $A_{7}$ does not have the PH-property.
The pairing $M$ considered above can be extended to the pairing $\{u_{1}v_{5}, u_{2}v_{2}, u_{3}v_{3}, u_{4}v_{4}\}\cup\{u_{i}v_{i+1}:i\in\{5,\ldots,n\}\}$ of $A_{n}$, for any $n>7$, and by a similar argument used for $A_7$, we conclude that $A_{n}$ does not have the PH-property for every $n\geq 7$.
\end{proof}

\subsection{$A[n,2]$}\label{subsection an2}

In \cite{ThomassenEtAl}, it is mentioned that Seongmin Ok and Thomas Perrett informed the authors that they have obtained an infinite class of $4$-regular graphs having the PH-property: such a graph in this family is obtained from a cycle of length at least three, by replacing each vertex by two isolated vertices and replacing each edge by the four edges joining the corresponding pairs of vertices. More formally, the resulting graph starting from a $n$-cycle, for $n\geq 3$, has vertex set $\{s_{i},t_{i}:i\in[n]\}$ such that, for every $i\in[n]$, $s_{i}$ and $t_{i}$ are both adjacent to $s_{i+1\pmod{n}}$ and $t_{i+1\pmod{n}}$. As far as we know, no proof of this can be found in literature, and in what follows we give a proof of this result. Before we proceed, we remark that the function that maps $s_{i}$ to $u_{i}$, and $t_{i}$ to $v_{i+1}$, for every $i\in[n]$, is an isomorphism between the above graph and the accordion graph $A[n,2]$.

\begin{theorem}\label{Theorem An2}
The accordion graph $A[n,2]$ has the PH-property, for every $n\geq 3$.
\end{theorem}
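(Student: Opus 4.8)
The plan is to argue directly on the structure of $A[n,2]$, exploiting the fact that, by the isomorphism recorded just before the statement, the graph is obtained from a cycle $C_n$ by doubling every vertex into a pair $\{u_i, v_{i+1}\}$ and replacing every edge by a complete bipartite $K_{2,2}$ between consecutive pairs. So fix a pairing $M$ of $A[n,2]$; I want to build a perfect matching $N$ of $A[n,2]$ with $M\cup N$ a Hamiltonian cycle of $K_{A[n,2]}$. The natural first move is to classify the edges of $M$ according to how they sit relative to the ``super-vertices'' $W_i := \{u_i, v_{i+1}\}$: an edge of $M$ either lies inside some $W_i$ (call it a \emph{short} edge), joins $W_i$ to $W_{i+1}$ (a \emph{cycle-type} edge), or joins two non-adjacent super-vertices $W_i$ and $W_j$ (a \emph{long} edge, necessarily a non-edge of $A[n,2]$ unless it happens to be a spoke). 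Contracting each $W_i$ to a point turns $M$ into a multiset of chords/loops on the cycle $C_n$, and the goal becomes: reconstruct, using only edges of $A[n,2]$, a Hamiltonian cycle compatible with this pattern.

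The key step I would carry out is an induction on $n$ that \emph{absorbs} super-vertices one at a time, in the same spirit as the proof of Theorem~\ref{Theorem AnPMH} for antiprisms. Concretely: pick a super-vertex $W_n$ that is as ``tame'' as possible with respect to $M$ — ideally one where the two vertices $u_n, v_{n+1}$ are either matched to each other by $M$, or matched into the two neighbouring super-vertices $W_{n-1}$ and $W_1$. If $M$ contains the short edge inside $W_n$, delete $W_n$, splice the cycle $C_n$ down to $C_{n-1}$, apply induction to the restricted pairing on $A[n-1,2]$, and then re-insert $u_n, v_{n+1}$ by subdividing an appropriate edge of the resulting Hamiltonian cycle that runs between $W_{n-1}$ and $W_1$ (there must be at least two such edges in $M\cup N$, since the Hamiltonian cycle crosses the ``cut'' an even, positive number of times, so one of them is an $N$-edge that we may reroute through the $K_{2,2}$'s at $W_n$). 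The cases where $M$'s edges at $W_n$ are cycle-type or long require pushing the endpoints to a neighbouring super-vertex first and carrying a small bookkeeping of which of $u, v$ is ``free'', exactly as in cases (i)--(iii) of Theorem~\ref{Theorem AnPMH}; the abundance of parallel edges in each $K_{2,2}$ is what makes every local reconnection available.

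For the base case I would take $n=3$ (or $n=3,4$) and dispose of it by a finite check — $A[3,2]$ has $6$ vertices, so there are only finitely many pairings, and one exhibits an extending matching for each, or invokes a small computer verification as was done for $A_5, A_6$ in the previous theorem.

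The main obstacle I anticipate is the presence of \emph{long} edges of $M$ — the genuine non-edges of $K_{A[n,2]}$ that jump between far-apart super-vertices. These are precisely what make the PH-property strictly stronger than the PMH-property, and a naive local induction can get stuck when $W_n$ is an endpoint of such a long edge with no slack at its neighbours. To handle this I would not delete $W_n$ but instead argue globally: a long edge $e = xy$ of $M$ partitions the cycle of super-vertices into two arcs, and I would show that one can always route $N$ so that $e$ together with one monotone traversal of each arc closes up into a single Hamiltonian cycle — here the fact that every consecutive pair of super-vertices is joined by a full $K_{2,2}$ guarantees that, whatever parity/side constraints the endpoints of the long edges impose, there is enough freedom to thread a Hamiltonian path through each arc with prescribed endpoints in the required super-vertices. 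Making this routing argument uniform over all configurations of long edges is where the real work lies, and I expect it to be the heart of the proof.
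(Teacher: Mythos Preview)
Your sketch has two genuine gaps. First, the re-insertion step in the short-edge case does not go through: after deleting $W_n$ and invoking induction on $A[n-1,2]$ to obtain $H'=M'\cup N'$, you assert that there is an $N'$-edge between $W_{n-1}$ and $W_1$ to subdivide, appealing to a cut-parity argument. But $H'$ is a Hamiltonian cycle of $K_{A[n-1,2]}$, not of $A[n-1,2]$; the crossings of any vertex bipartition may all be $M'$-edges (long pairing edges), and nothing prevents $N'$ from matching both vertices of $W_{n-1}$ into $W_{n-2}$ and both vertices of $W_1$ into $W_2$, leaving no $N'$-edge between $W_{n-1}$ and $W_1$ to reroute through $W_n$. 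Second, you correctly flag the long-edge case as the heart of the matter and then leave it as an unspecified global routing claim; that \emph{is} the content of the theorem, so the proposal does not yet contain a proof.

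The paper's argument is organised differently and sidesteps both problems at once. It proves the PH-property for the spanning subgraph $A'[n,2]$ obtained by deleting four edges so that the block structure becomes a \emph{path} rather than a cycle, and it works with the pairs $\{u_i,v_i\}$ (linked by vertical spokes) rather than your super-vertices $\{u_i,v_{i+1}\}$. Given a pairing $M\neq\{u_iv_i:i\in[n]\}$, set $\alpha=\max\{i:u_iv_i\notin M\}$; then $u_iv_i\in M$ for all $i>\alpha$, so every $M$-edge other than these spokes --- in particular every long edge --- has both endpoints among $u_1,v_1,\ldots,u_\alpha,v_\alpha$, and the $M$-partners $y,z$ of $u_\alpha,v_\alpha$ lie in $G_1\cong A'[\alpha-1,2]$. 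One applies the inductive hypothesis to $(M\cap E(K_{G_1}))\cup\{yz\}$ on $G_1$ and to $\{u_iv_i:\alpha\le i\le n\}$ on $G_2\cong A'[n-\alpha+1,2]$, obtaining Hamiltonian paths with endpoints $y,z$ and $u_\alpha,v_\alpha$ respectively, and splices them via the $M$-edges $yu_\alpha$ and $zv_\alpha$. Long edges never require a separate treatment: the choice of $\alpha$ confines them entirely to $G_1$, where the inductive hypothesis swallows them whole. This is the idea your outline is missing.
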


\begin{proof}
Let $A'[n,2]$ be the graph depicted in Figure \ref{FigureAAn2}, obtained from $A[n,2]$ by deleting the following set of edges: $\{u_{1}u_{n}, v_{1}v_{n}, u_{n-1}v_{1}, u_{n}v_{2}\}$. We use induction on $n$ to show that $A'[n,2]$ has the PH-property, for every $n\geq 3$. The result then follows since $A'[n,2]$ is a spanning subgraph of $A[n,2]$.

\begin{figure}[h]
\centering
\includegraphics[width=0.354\textwidth]{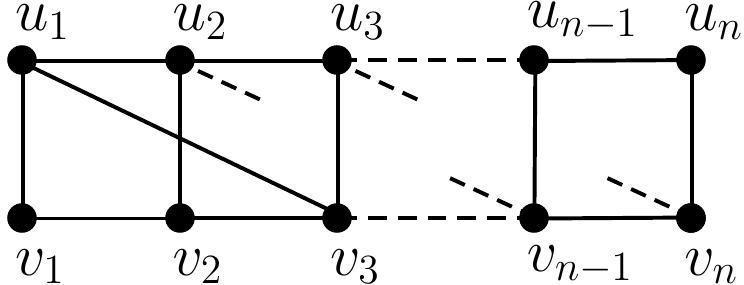}
\caption{The graph $A'[n,2]$}
\label{FigureAAn2}
\end{figure}

When $n=3$, one can show by a case-by-case analysis (or using an exhaustive computer search) that the graph $A'[3,2]$ has the PH-property. So we assume that $n>3$ and let $M$ be a pairing of $A'[n,2]$, hereafter denoted by $G$. If $M$ consists of only vertical spokes, that is, $M=\{u_{i}v_{i}:i\in[n]\}$, then
\begin{enumerate}[(i)]
\item $(v_{1},u_{1},v_{3},u_{3},\ldots, v_{n-1},u_{n-1},u_{n},v_{n},u_{n-2},v_{n-2},\ldots, u_{2},v_{2})$, when $n$ is  even, or
\item $(v_{1}, u_{1},v_{3},u_{3},\ldots, v_{n},u_{n},u_{n-1},v_{n-1},u_{n-3}\ldots,u_{2},v_{2})$, when $n$ is odd,
\end{enumerate}
is a Hamiltonian cycle of $K_G$ containing the pairing $M$. So assume that $M\neq\{u_{i}v_{i}: i\in[n]\}$. Consequently, there exists $i\in[n]$ such that $u_{i}v_{i}\not\in M$. Let $\alpha=\max\{i\in[n]: u_{i}v_{i}\not\in M\}$. We note that by a parity argument, $\alpha>1$. Consider the two subgraphs of $G$ induced by $\{u_{1}, v_{1}, \ldots, u_{\alpha-1}, v_{\alpha-1}\}$, denoted by $G_{1}$, and $\{u_{\alpha}, v_{\alpha}, \ldots, u_{n}, v_{n}\}$, denoted by $G_{2}$. We remark that $G_{1}$ and $G_{2}$ are the two components obtained by deleting from $G$ the set of edges $X$, where $X=\{u_{\alpha-1}u_{\alpha}, v_{\alpha-1}v_{\alpha},u_{\alpha-2}v_{\alpha}\}$ if $\alpha=n$, and $X=\{u_{\alpha-1}u_{\alpha}, v_{\alpha-1}v_{\alpha},u_{\alpha-2}v_{\alpha}, u_{\alpha-1}v_{\alpha+1}\}$ otherwise. We also remark that depending on the value of $\alpha$, $G_{1}$ is isomorphic to either $K_{2},C_{4}$ or $A'[\alpha-1,2]$, and $G_{2}$ is isomorphic to $K_{2}, C_{4}$ or $A'[n-(\alpha-1),2]$. Without loss of generality, we assume that $|V(G_{1})|\geq |V(G_{2})|$, implying that $3\leq\alpha\leq n$.

Next, consider the two edges $yu_{\alpha}$ and $zv_{\alpha}$ in $M$. Since for $i>\alpha$, $u_iv_i \in M$, the vertices $y$ and $z$ both belong to $\{u_{1}, v_{1}, \ldots, u_{\alpha-1}, v_{\alpha-1}\}$.
Let $M_{1}=(M\cap E(K_{G_{1}}))\cup \{yz\}$. One can see that $M_{1}$ is a pairing of $G_{1}$, and so, by the inductive step, $M_{1}$ is contained in a Hamiltonian cycle $H_{1}$ of $K_{G_{1}}$. Consequently, $H_{1}$ contains a Hamiltonian path $H_{1}'$ of $K_{G_{1}}$ with endvertices $y$ and $z$.

When $\alpha=n$, by adding the edges $yu_{\alpha}, u_{\alpha}v_{\alpha}, v_{\alpha}z$ to $E(H_{1}')$, we obtain a Hamiltonian cycle of $K_{G}$ containing $M$. For $\alpha\leq n-1$, we proceed as follows. Let $M_{2}=(M\cap E(G_{2}))\cup \{u_{\alpha}v_{\alpha}\}$. This is clearly a pairing of $G_{2}$, and so, by the inductive step, there exists a Hamiltonian cycle $H_{2}$ of $K_{G_{2}}$ containing $M_{2}$. Let $H_{2}'$ be the Hamiltonian path of $G_{2}$ obtained by deleting the edge $u_{\alpha}v_{\alpha}$ from $E(H_{2})$. Consequently, combining $H_{1}'$ and $H_{2}'$ together with the edges $yu_{\alpha}$ and $zv_{\alpha}$, we form a Hamiltonian cycle of $K_{G}$ containing $M$, as required.
\end{proof}

\section{The accordion graph $A[n,k]$ when $\gcd(n,k)\geq 5$} \label{Sect-NotPMH}

The method adopted in this section follows a similar line of thought as that used in \cite{CpCq}. Let $q=\gcd(n,k)\geq 5$, let $p=\frac{2n}{\gcd(n,k)}$ and let $p'=\frac{p}{2}$. Consider a grid-like drawing of the accordion graph $A[n,k]$ as in Remark \ref{RemarkDrawing}. For simplicity, we let the vertices $v_{1}, u_{1}, v_{1+k}, u_{1+k}, \ldots,  v_{1+(p'-1)k}, u_{1+(p'-1)k}$, be referred to as $a_{1}, a_{2}, \ldots, a_{p}$. We define the vertices $\{b_{i},c_{i},d_{i},e_{i}:i\in[p]\}$ in a similar way, where, in particular, the vertices $b_{1}, \ldots, e_{1}$, and $b_{2}, \ldots, e_{2}$, represent $v_{2}, \ldots, v_{5}$, and  $u_{2}, \ldots, u_{5}$, respectively. If $\gcd(n,k)=6$, we refer to $v_{6},u_{6},v_{6+k}, u_{6+k},\ldots, v_{6+(p'-1)k}, u_{6+(p'-1)k}$, as $f_{1},f_{2}\ldots, f_{p}$, and if $\gcd(n,k)>6$, we simply do not label all the other vertices since we are only interested in the subgraph of $A[n,k]$ induced by the vertices $\{a_i,\ldots,f_i: i\in[p]\}$. This can be seen better in Figure \ref{FigureGrid6x6}.

\begin{figure}[h]
      \centering
      \includegraphics[width=0.522\textwidth]{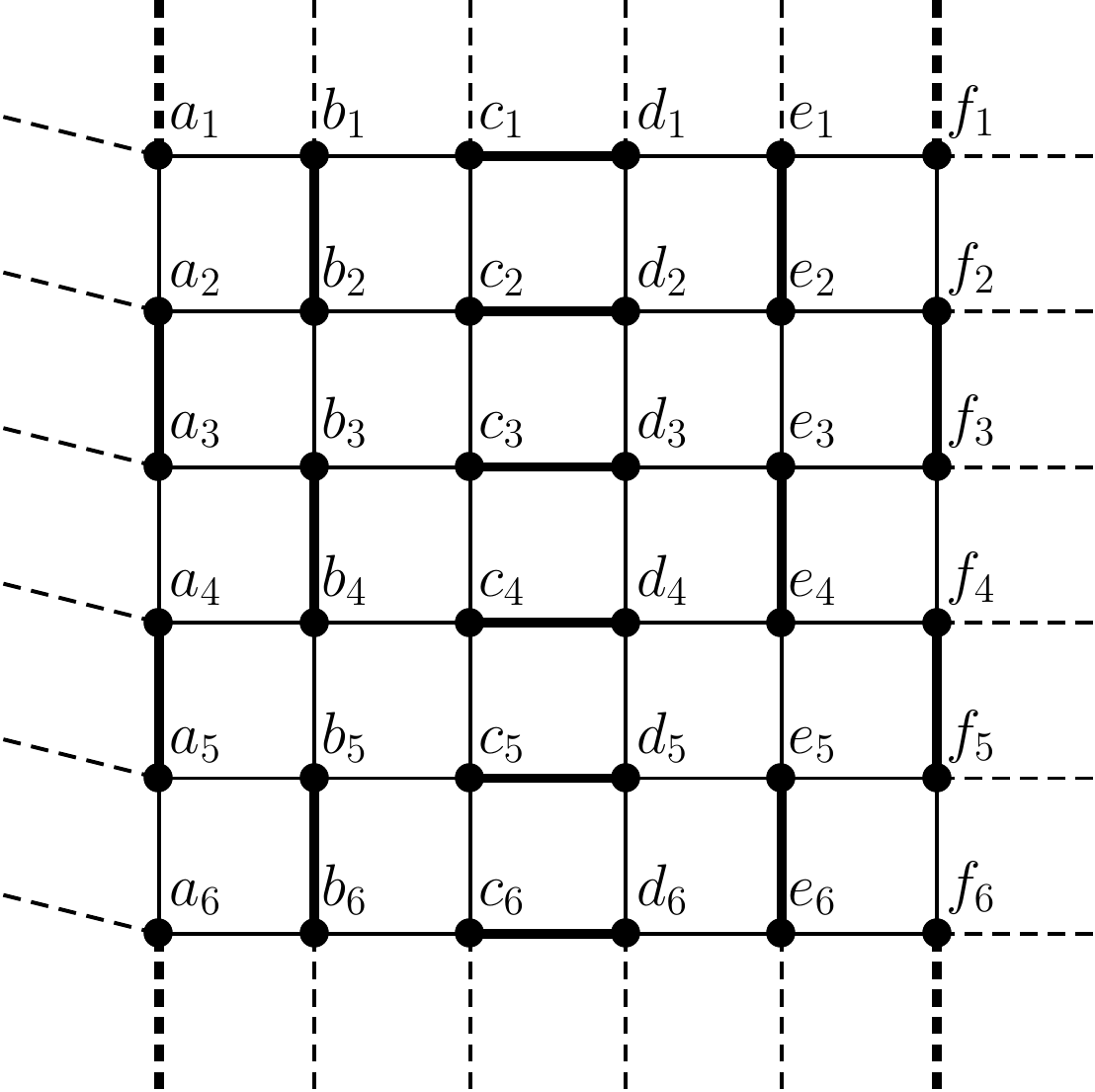}
      \caption{Edges belonging to $S$ in $A[n,k]$ when $\gcd(n,k)\geq 6$}
      \label{FigureGrid6x6}
\end{figure}

For each $i\in[p]$, let $L_{i}$ and $R_{i}$ represent the edges $b_{i}c_{i}$ and $d_{i}e_{i}$, respectively, whilst $\mathcal{L}=\{L_{i}:i\in[p]\}$ and $\mathcal{R}=\{R_{i}:i\in[p]\}$. Let  $S$ denote the following set of edges:
\begin{enumerate}[(i)]
\item $a_{i}a_{i+1}$, for every even $i\in[p]$;
\item $b_{i}b_{i+1}$ and $e_{i}e_{i+1}$, for every odd $i\in[p]$;
\item $c_{i}d_{i}$, for every $i\in[p]$; and
\item in the case when $q\geq 6$,  $f_{i}f_{i+1}$, for every even $i\in [p]$.
\end{enumerate}
Since $p$ is even, $A[n,k]$ has a perfect matching $M$ that contains $S$.

In \cite{CpCq}, it was shown that $C_p\square C_q$ is not PMH except when $p=q=4$. In the case when $q\geq 6$, the proof utilises exclusively the set $S$ of edges described above (and adapted to $C_{p}\square C_{q}$) to show that a perfect matching $M$ containing this set cannot be extended to a Hamiltonian cycle of $C_p\square C_q$. Since the same set $S$ of edges can also be chosen in a perfect matching of $A[n,k]$, the proof extends naturally and hence we have the following result.

\begin{lemma}\label{lemma gcd>=6}
The accordion graph $A[n,k]$ is not PMH if $\gcd(n,k)\geq 6$.
\end{lemma}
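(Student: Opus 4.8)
The plan is to reduce the claim directly to the known non-PMH result for $C_p \square C_q$ proved in \cite{CpCq}, exactly along the lines already sketched in the paragraph preceding the statement. First I would fix $q = \gcd(n,k) \geq 6$, $p = \tfrac{2n}{q}$, and $p' = \tfrac p2$, and recall from Remark~\ref{RemarkDrawing} that deleting the edges $\{u_{tq}u_{tq+1}, v_{tq}v_{tq+1} : t \in \{1,\ldots,\tfrac nq\}\}$ from $A[n,k]$ yields a copy of $C_{p}\square P_{q}$; adding back these ``wrap-around'' edges in the grid picture identifies the relevant spanning subgraph structure with $C_p \square C_q$ on the vertices $\{a_i,\ldots,f_i,\ldots : i \in [p]\}$ arranged in $q$ columns (the $a$-column, $b$-column, \ldots) each of which is a $p$-cycle, with consecutive columns joined by a perfect matching. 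The key observation, already isolated in the text, is that the distinguished edge set $S$ — the $a_ia_{i+1}$ for even $i$, the $b_ib_{i+1}$ and $e_ie_{i+1}$ for odd $i$, the $c_id_i$ for all $i$, and the $f_if_{i+1}$ for even $i$ when $q \geq 6$ — consists entirely of edges that are simultaneously edges of $A[n,k]$ (cycle edges and vertical spokes, under the relabelling) and edges of the corresponding $C_p \square C_q$.

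Next I would invoke the main argument of \cite{CpCq}: any perfect matching of $C_p \square C_q$ containing $S$ cannot be extended to a Hamiltonian cycle of $C_p \square C_q$, and — crucially — that obstruction argument is \emph{local}, using only the edges of $S$ together with the ambient structure of the six columns $a,\ldots,f$; it never uses edges outside the induced subgraph on these columns. Since $p$ is even, $S$ extends to a perfect matching $M$ of $A[n,k]$. I would then argue that $M$ cannot be extended to a Hamiltonian cycle of $A[n,k]$ either: any completing perfect matching $N$ would, together with $M$, yield a Hamiltonian cycle $H$; restricting attention to how $H$ must traverse the vertices in columns $a$ through $f$, the forced local behaviour imposed by the edges of $S$ is identical in $A[n,k]$ and in $C_p \square C_q$ — because $A[n,k]$ has \emph{no} edges joining a column to a non-adjacent column within this block other than those already present in $C_p\square C_q$ — so the same contradiction (a short cycle forced among these vertices, or an unavoidable parity/connectivity clash) arises. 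Hence $M$ witnesses that $A[n,k]$ is not PMH.

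I expect the only real obstacle to be making precise the assertion that ``the proof extends naturally'': one must check that every step of the \cite{CpCq} argument for $q \geq 6$ refers only to edges of $A[n,k]$, i.e., that the extra diagonal spokes of $A[n,k]$ do not give $N$ any escape route that is unavailable in $C_p \square C_q$. The cleanest way to handle this is to observe that within the six-column block the diagonal spoke $u_iv_{i+k}$ either lands in the \emph{same} column (when the relabelled indices coincide) or is irrelevant to the forced sub-paths pinned down by $S$; more carefully, the vertices incident to edges of $S$ and their forced neighbours all lie in columns $a$--$f$, and the induced subgraph of $A[n,k]$ on these columns contains no edges beyond those of the induced subgraph of $C_p\square C_q$ on the corresponding vertices. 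Granting this column-locality, the contradiction derived in \cite{CpCq} transfers verbatim, and the lemma follows. For brevity I would present this as: ``By Remark~\ref{RemarkDrawing} and the relabelling above, the subgraph of $A[n,k]$ induced by $\{a_i,\ldots,f_i:i\in[p]\}$ coincides with the corresponding induced subgraph of $C_p\square C_q$; since the non-extendability argument of \cite{CpCq} for $q\ge 6$ uses only edges within this block, the perfect matching $M \supseteq S$ of $A[n,k]$ cannot be extended to a Hamiltonian cycle of $A[n,k]$.''
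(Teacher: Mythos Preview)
Your approach is exactly the paper's: both reduce to the non-PMH argument for $C_p\square C_q$ from \cite{CpCq} via the distinguished edge set $S$, observing that $S$ sits inside $A[n,k]$ and that the obstruction in \cite{CpCq} for $q\geq 6$ is driven entirely by $S$, so the contradiction transfers. The paper states this in a single sentence (``the proof extends naturally''); you have simply tried to flesh out why.

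One technical inaccuracy in your expansion: the claim that ``the induced subgraph of $A[n,k]$ on columns $a$--$f$ coincides with the corresponding induced subgraph of $C_p\square C_q$'' fails when $q=6$. In that case columns $a,\ldots,f$ exhaust all vertices, and the wrap-around edges between the $f$-column and the $a$-column differ: $C_p\square C_6$ has edges $f_ia_i$, whereas $A[n,k]$ has the shifted cycle edges $u_{6t}u_{6t+1},\,v_{6t}v_{6t+1}$, which join $f_i$ to some $a_{i'}$ with $i'\neq i$. So the two induced subgraphs are not equal, and neither is contained in the other --- meaning $N$ genuinely has different options available in the two graphs. The correct locality statement, and what the paper is implicitly using, is that the \cite{CpCq} argument for $q\geq 6$ never touches the wrap-around edges between the last column and the first: the contradiction comes solely from the bracket structure on $\mathcal{L}$ and $\mathcal{R}$ together with the $M$-matchings inside the $a$-, $b$-, $e$-, and $f$-columns, and those pieces are identical in both graphs. (This is exactly why the paper singles out $q=5$ for a separate proof: there the \cite{CpCq} argument \emph{does} use the wrap edges $e_ia_i$.) With that adjustment your justification is sound.
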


We shall now show that $A[n,k]$ is not PMH in the case when $\gcd(n,k)=5$. We remark that, in this case, the proof in \cite{CpCq} cannot be extended to $A[n,k]$ because it makes use of the edges $e_ia_i$ of $C_p\square C_q$, which are missing in $A[n,k]$. For the remaining part of this section, we shall need some results extracted from the proof of the main theorem in \cite{CpCq}, and adapted for accordion graphs. We note that the arguments in \cite{CpCq} are quite elaborate and lengthy, but when adapted to our case, they remain essentially the same. 
%Hence our decision not to reproduce them in detail here but to only give the main points in the following lemma.
Thus, in order not to risk obscuring the arguments that are really required for this work, we omit reproducing the same detailed discussion presented in \cite{CpCq}, and instead give the most important points emanating from the main theorem in the following self-contained lemma. 

\begin{lemma} \label{Lem-CpCq}
Let $\gcd(n,k)\geq 5$. If there exists a perfect matching $M$ of $A[n,k]$ containing $S$ and another perfect matching $N$ of $A[n,k]$ such that $M\cup N$ is a Hamiltonian cycle $H$ of $A[n,k]$, then the following statements hold.
\begin{enumerate}[(i)]
\item  $|\mathcal{L}\cap N|$ and $|\mathcal{R}\cap N|$ are both even and nonzero.
\item  A maximal sequence of consecutive edges belonging to $\mathcal{L}-N$ (or $\mathcal{R}-N$) is of even length (consecutive edges are edges having indices which are consecutive integers taken modulo $p$, with complete residue system $\{1,\ldots, p\}$).
\item  The edges of $\mathcal{L}\cap N$ are partitioned into pairs of edges $\{L_{\gamma},L_{\gamma'}\}$, where $\gamma$ is odd and $\gamma'$ is the least integer greater than $\gamma$ (taken modulo $p$) such that $L_{\gamma'}\in N$ (and similarly for $\mathcal{R}\cap N$). In this case, if we start tracing the Hamiltonian cycle $H$ from $c_{\gamma}$ going towards $b_{\gamma}$, then $H$ contains a path with edges alternating in $N$ and $M$, starting from $c_{\gamma}$ and ending at $c_{\gamma'}$, with the internal vertices on this path being $\{b_{\gamma}, b_{\gamma'}\}$, if $\gamma'=\gamma+1$, or belonging to the set $\{b_{\gamma}, a_{\gamma+1},b_{\gamma+1},\ldots, a_{\gamma'-1},b_{\gamma'-1},b_{\gamma'}\}$, if $\gamma' \neq \gamma+1$. In each of these two cases we refer to such a path between $c_{\gamma}$ and $c_{\gamma'}$ as an $L_{\gamma}L_{\gamma'}$\emph{-bracket}, or just a \emph{left-bracket}, with $L_{\gamma}$ and $L_{\gamma'}$ being the \emph{upper} and \emph{lower} edges of the bracket, respectively. \emph{Right-brackets} are defined similarly.
\item If $L_{\gamma'}$ is a lower edge of a left bracket, then $L_{\gamma'+1}$ belongs to $N$, and so is an upper edge of a possibly different left bracket (and similarly for right-brackets).
\item If $L_{i}\not\in N$, for some even $i\in [p]$, then $c_{i}c_{i+1}\in N$ (and similarly for edges belonging to $\mathcal{R}-N$).
\end{enumerate}
\end{lemma}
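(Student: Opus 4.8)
The plan is to reduce to $\gcd(n,k)=5$ and then transport the relevant portion of the argument of~\cite{CpCq} for $C_p\square C_q$. First, when $\gcd(n,k)\geq 6$ the argument behind Lemma~\ref{lemma gcd>=6} (which uses only $S\subseteq M$) shows that \emph{no} perfect matching of $A[n,k]$ containing $S$ can be completed to a Hamiltonian cycle of $A[n,k]$; hence the hypothesis of the lemma is never met and the statement holds vacuously. So we may assume $\gcd(n,k)=5$, so that $q=5$ and $p=\tfrac{2n}{5}$ is even. By Remark~\ref{RemarkDrawing}, deleting $p$ suitable cycle edges turns $A[n,k]$ into $C_p\square P_5$, and restoring them reconnects the last column to the first; thus $A[n,k]$ differs from $C_p\square C_5$ only in this reconnection, which in $A[n,k]$ is ``shifted'' relative to the straight perfect matching $\{e_ia_i\}$ of $C_p\square C_5$. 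Crucially, this is exactly the set of edges that the proof in~\cite{CpCq} uses only \emph{after} establishing facts of the type (i)--(v), so everything needed below transfers with only cosmetic changes.

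\textbf{Rigidity from $S\subseteq M$, and parts (i)--(ii).}
The engine is the rigidity forced by $S\subseteq M$: the edges $\{a_ia_{i+1}\}$ of $S$ with $i$ even form a perfect matching of the cycle on column $a$; likewise $S$ matches columns $b$ and $e$ internally, while $\{c_id_i:i\in[p]\}\subseteq M$ matches column $c$ entirely onto column $d$. Hence, at any vertex of columns $a,b,c,d,e$, the $N$-edge is confined to a short explicit list of neighbours. Statement (ii) follows directly: along a maximal run $L_\ell,\ldots,L_r$ of edges of $\mathcal{L}$ not in $N$, each $c_i$ (with $\ell\le i\le r$) has $M$-edge $c_id_i$ and $L_i\notin N$, so its $N$-edge lies inside column $c$; and the two candidate ``exit'' edges $c_{\ell-1}c_\ell$ and $c_rc_{r+1}$ are unavailable because $L_{\ell-1},L_{r+1}\in N$ already saturate $c_{\ell-1}$ and $c_{r+1}$. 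Thus $N$ restricts to a perfect matching of the path $c_\ell c_{\ell+1}\cdots c_r$, forcing $r-\ell+1$ to be even; the same argument works for $\mathcal{R}$. The parity halves of (i) are then immediate, since $|\mathcal{L}\cap N|=p-|\mathcal{L}\setminus N|$ is a difference of two even numbers. For the ``nonzero'' halves one argues, as in~\cite{CpCq}, that $\mathcal{L}\cap N=\emptyset$ would force column $c$ to be entirely internally $N$-matched, and then chases the resulting forced edges through the frozen columns to contradict the fact that $H$ is a single spanning cycle; a point to be checked is that this chase never appeals to the shifted reconnection edges.

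\textbf{Parts (iii)--(v) via tracing $H$.}
From (i) and (ii), the edges of $\mathcal{L}\cap N$, read cyclically, alternate in the parity of their indices, so the pairing in (iii) is well defined; call the odd-indexed ones \emph{upper} and the even-indexed ones \emph{lower}. Starting at $c_\gamma$ for an upper edge $L_\gamma$ and following $H$ --- first along $L_\gamma$ to $b_\gamma$, then along the $M$-edge $b_\gamma b_{\gamma+1}$, which is legitimate because $\gamma$ is odd --- one shows that the resulting path, alternating between edges of $N$ and $M$, is forced to stay within columns $a$ and $b$, with strictly increasing index, until it leaves through an edge $L_{\gamma'}\in N$; since $b_{\gamma'}$ is reached along an $M$-edge $b_{\gamma'-1}b_{\gamma'}$, the index $\gamma'$ is even, so $L_{\gamma'}$ is lower, and it is the first element of $\mathcal{L}\cap N$ after $L_\gamma$. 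This gives (iii), including the listing of the internal vertices. Statement (iv) is obtained by continuing $H$ just beyond a lower edge $L_{\gamma'}$ and using that $b_{\gamma'}$ is already saturated by the bracket, which forces $L_{\gamma'+1}$ back into $N$. Finally (v) follows by combining (iii) and (iv): a maximal run of $\mathcal{L}\setminus N$ cannot be preceded by a lower edge (that would contradict (iv)), so it is preceded by an upper, odd-indexed edge, whence the run begins at an even index; since the perfect matching of an even path is unique, $c_ic_{i+1}\in N$ for every even $i$ in the run.

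\textbf{The main obstacle.}
The real work is the bookkeeping inside the trace of (iii) --- showing that the alternating path can neither stall nor escape columns $a$ and $b$ (in particular, that it never uses a shifted reconnection edge or a spoke leading out of the frozen region) --- together with the ``nonzero'' argument for (i) and the verification of (iv). All of this is the substance of the main theorem of~\cite{CpCq}, so we expect the write-up to be a faithful, somewhat lengthy, translation of those arguments to the case $\gcd(n,k)=5$, the only genuinely new checks being that the shift in the reconnection does not interfere before (i)--(v) are in hand.
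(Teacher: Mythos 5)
Your proposal matches the paper's treatment: the authors give no proof of this lemma at all, explicitly stating that the statements are extracted from the (elaborate and lengthy) proof of the main theorem of \cite{CpCq} and that the arguments "remain essentially the same" when adapted to $A[n,k]$, which is precisely the strategy you adopt. The extra detail you supply — the vacuousness for $\gcd(n,k)\geq 6$ via Lemma \ref{lemma gcd>=6}, the column-$c$ rigidity argument for (ii) and the parity half of (i), and the derivations of (iii)--(v) from one another — is correct and consistent with the structure of $S$, and you rightly flag the shifted reconnection edges as the only point where the transfer from $C_p\square C_q$ needs checking.
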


\begin{lemma}\label{lemma gcd=5}
The accordion graph $A[n,k]$ is not PMH if $\gcd(n,k)= 5$.
\end{lemma}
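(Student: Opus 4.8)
The plan is to argue by contradiction along the same lines as Lemma~\ref{lemma gcd>=6}, using the structural consequences collected in Lemma~\ref{Lem-CpCq}. Assume $\gcd(n,k)=5$, so $q=5$ and $p=\frac{2n}{5}$ is even; fix the perfect matching $M$ of $A[n,k]$ containing the set $S$ of edges defined above, and suppose for contradiction that there is a perfect matching $N$ with $M\cup N$ a Hamiltonian cycle $H$. The working region is the subgraph induced by $\{a_i,b_i,c_i,d_i,e_i:i\in[p]\}$, which is a $C_p\square P_5$ glued cyclically in the $p$-direction; the only difference from the $C_p\square C_5$ situation treated in \cite{CpCq} is that the ``wrap-around'' edges $e_ia_i$ are absent in $A[n,k]$, so every Hamiltonian cycle must cross between the $a$-column and the $e$-column using only the horizontal edges $a_ia_{i+1}$ and $e_ie_{i+1}$ (for the appropriate parities dictated by $S$) and the rungs inside the strip.

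First I would record the immediate consequences of Lemma~\ref{Lem-CpCq}(i)--(v) for $q=5$: the edges of $\mathcal L\cap N$ and of $\mathcal R\cap N$ split into left- and right-brackets, and by part~(iv) the lower edge of every bracket is immediately followed by an upper edge of another bracket, so the brackets on each side partition $[p]$ into consecutive blocks. Since the column between $\mathcal L$ and $\mathcal R$ is exactly the single column of vertices $\{c_i,d_i\}$ joined by the rungs $c_id_i\in S\subseteq M$ (item (iii) of the definition of $S$), each $c_i$ is matched in $H$ either into a left-bracket (via $b_ic_i=L_i$), or downward via $c_id_i\in M$, or horizontally via $c_{i-1}c_i$ or $c_ic_{i+1}\in N$; symmetrically for $d_i$ and the right-brackets. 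The key point is that $c_id_i\in M$ forces: $c_i$ is in $H$ incident to $c_id_i$ exactly when $d_i$ is, and in that case neither $L_i\in N$ nor $R_i\in N$, and moreover by part~(v) the horizontal edges $c_{i-1}c_i$, $d_{i-1}d_i$ behaviour is pinned down for even $i$. So the middle column acts as a rigid ``spine'': the set $T=\{i:c_id_i\in H\}$ and its complement are each unions of brackets, and tracing $H$ through the spine shows that the left-brackets and right-brackets must interleave consistently around the cyclic index set.

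The core of the argument is then a counting/parity contradiction exactly as in \cite{CpCq}: following $H$ and using that each left-bracket contributes an $N,M$-alternating path $c_\gamma\cdots c_{\gamma'}$ whose internal vertices lie in the $a,b$-columns (part~(iii)), and symmetrically each right-bracket lies in the $d,e$-columns, one shows that the brackets, together with the spine edges $c_id_i$ and the horizontal edges forced by parts~(iv)--(v), close up into \emph{more than one} cycle rather than a single Hamiltonian cycle — the $a$-column and $e$-column, cut off from each other by the absence of the $e_ia_i$ edges, cannot both be traversed within one closed walk once the brackets are forced into their blocks. Concretely I would track the cyclic sequence of bracket-types read off $\mathcal L$ and off $\mathcal R$ and show that any consistent choice yields a $2$-factor of $H$ with at least two components, contradicting Hamiltonicity. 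The main obstacle I anticipate is precisely this last combinatorial bookkeeping: one must handle the interaction of the left-bracket blocks, the right-bracket blocks and the spine set $T$ simultaneously around the whole cycle of length $p$, and check that \emph{no} assignment escapes the contradiction — in particular ruling out the degenerate configurations where a single bracket spans almost all of $[p]$. Once that is done, the conclusion that $A[n,k]$ is not PMH when $\gcd(n,k)=5$ follows, and combined with Lemma~\ref{lemma gcd>=6} gives the theorem as stated (namely, $A[n,k]$ is not PMH whenever $\gcd(n,k)\geq 5$).
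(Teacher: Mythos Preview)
Your proposal rests on a structural misreading of $A[n,k]$ when $q=\gcd(n,k)=5$. You assert that the $a$-column and the $e$-column are ``cut off from each other by the absence of the $e_ia_i$ edges,'' and you build your contradiction on that. But Remark~\ref{RemarkDrawing} says only that deleting the cycle edges $\{u_{tq}u_{tq+1},v_{tq}v_{tq+1}\}$ yields $C_p\square P_5$; in $A[n,k]$ itself these edges are present, and in the grid labelling they are precisely edges joining each $e_\beta$ to some $a_\theta$ (with $\theta\neq\beta$ of the same parity). So the two outer columns are \emph{not} disconnected, and a Hamiltonian cycle of $A[n,k]$ may well use these diagonal $e$--$a$ edges. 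Your proposed mechanism --- that the $a$- and $e$-columns cannot both be traversed in one closed walk --- therefore fails as stated, and the vague ``counting/parity contradiction'' you allude to is not a substitute for an argument.

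The paper's proof is short and goes in almost the opposite direction: it \emph{exploits} those extra $e_\beta a_\theta$ edges. Starting from any odd $\beta$ with $R_\beta\in N$, the adjacent vertex $a_\theta$ (with $\theta$ odd) cannot use its accordion edge to $e_\beta$ in $N$ (since $e_\beta$ is already saturated by $R_\beta$), so $a_\theta$ must be reached via a left bracket, forcing $a_\theta b_\theta\in N$ and hence $L_\theta\notin N$; this in turn pins down $c_\theta c_{\theta-1}\in N$ and forbids $d_\theta d_{\theta-1}\in N$, so $R_\theta,R_{\theta-1}\in N$. Iterating over all odd indices yields $\mathcal R\subseteq N$. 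But then every $e$-vertex has its $N$-edge in $\mathcal R$, so no accordion edge lies in $N$; taking any $L_\nu\in N$ (which exists by Lemma~\ref{Lem-CpCq}(i)), the vertex $a_\nu$ can no longer be absorbed into $H$, and Hamiltonicity fails. The key idea you are missing is precisely this propagation along the accordion edges that forces $\mathcal R\cap N=\mathcal R$.
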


\begin{proof}

\begin{figure}[h]
\centering
\includegraphics[width=0.45\textwidth]{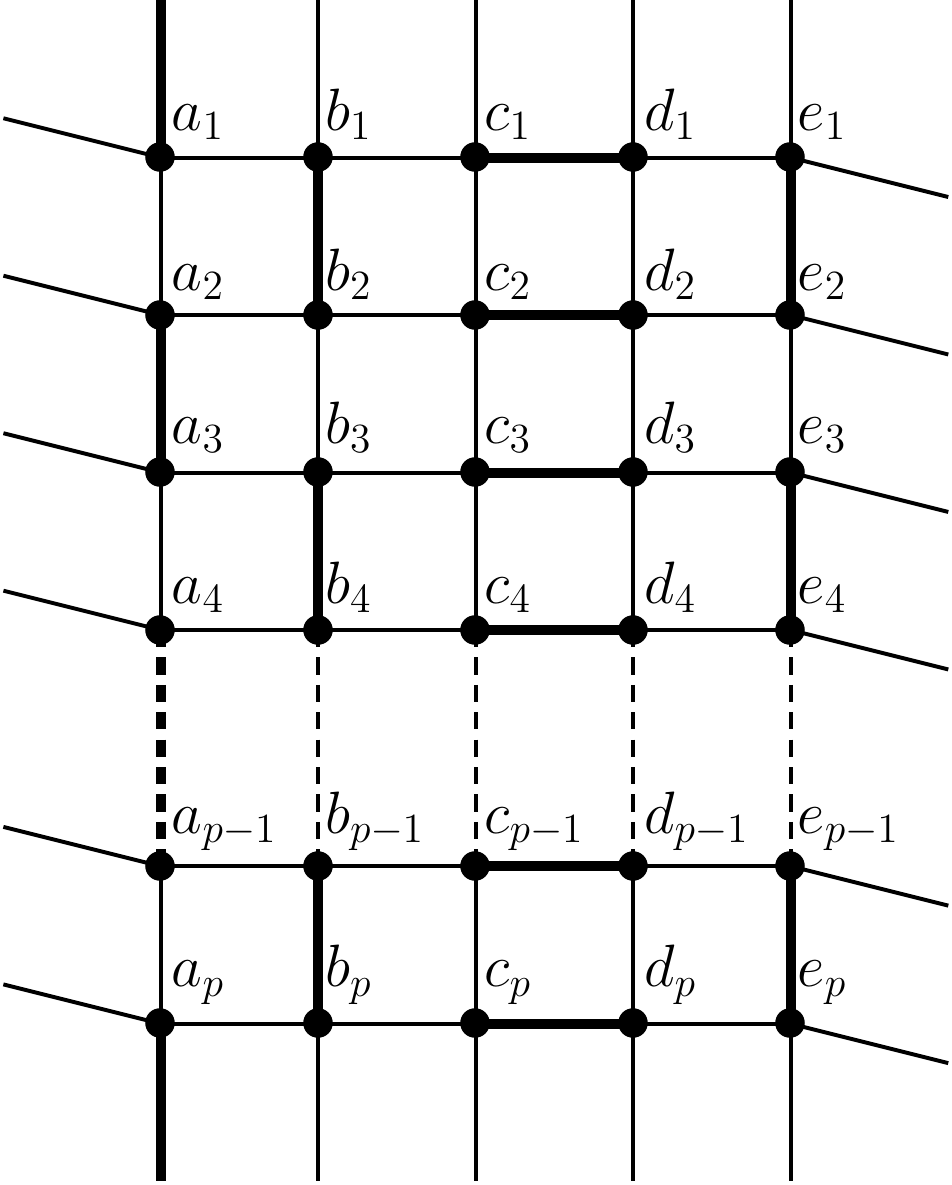}
\caption{$A[n,k]$ when $\gcd(n,k)=5$}
\label{FigureAn5}
\end{figure}

Let $M$ be a perfect matching of $A[n,k]$ which contains the set $S$ of edges as shown in Figure \ref{FigureAn5}, and let $p=\frac{2n}{5}$ (as defined earlier on in this section). Suppose that there exists a perfect matching $N$ of $A[n,k]$ such that $M\cup N$ is a Hamiltonian cycle $H$ of $A[n,k]$.

Since $\mathcal{R}\cap N\neq \emptyset$, there exists some odd $\beta \in [p]$ such that $R_{\beta}\in N$. We note that the vertex $e_{\beta}$ is adjacent to a unique $a_{\theta}$, for some odd $\theta\neq\beta$. The only way how $a_{\theta}$ can be reached by the Hamiltonian cycle $H$ is if it is reached by a left bracket, and so only if $a_{\theta}b_{\theta}\in N$. Consequently, $L_{\theta}=b_{\theta}c_{\theta}$ is not in $N$, implying that $c_{\theta}c_{\theta-1}\in N$ by Lemma \ref{Lem-CpCq}. By a similar reasoning, since $H$ is a Hamiltonian cycle, $d_{\theta}d_{\theta-1}$ cannot be contained in $N$, and so, in particular, $R_{\theta}$ and $R_{\theta-1}$ are respectively upper and lower edges belonging to $N$. Repeating the same procedure over again, first for $R_{\theta}$ and eventually for all edges in $\mathcal{R}$ having an odd index, one can deduce that $\mathcal{R}\cap N=\mathcal{R}$. Since $\mathcal{L}\cap N$ is non-empty, there exists some $\nu$ such that $L_{\nu}\in N$. The vertex $a_{\nu}$ must be reached by some right bracket in order to belong to $H$, however, this is impossible since $\mathcal{R}\cap N=\mathcal{R}$, contradicting the Hamiltonicity of $H$.
\end{proof}

By combining Lemma \ref{lemma gcd>=6} and Lemma \ref{lemma gcd=5} we obtain the main result of this section.

\begin{theorem}
The accordion graph $A[n,k]$ is not PMH if $\gcd(n,k)\geq 5$.
\end{theorem}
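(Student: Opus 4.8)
The plan is to argue by a simple dichotomy on the value of $q=\gcd(n,k)$. Under the hypothesis $q\geq 5$ there are exactly two possibilities, mutually exclusive and jointly exhaustive: either $q=5$ or $q\geq 6$. In the first case Lemma~\ref{lemma gcd=5} already asserts that $A[n,k]$ is not PMH, and in the second case the same conclusion is delivered by Lemma~\ref{lemma gcd>=6}. Hence the theorem follows with no further work, as a one-line corollary of the two lemmas; in particular (since a graph that is not PMH cannot have the PH-property) it also excludes all such $A[n,k]$ from the search for quartic graphs with the PH-property.

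Since the theorem itself is immediate, the real substance sits in those two lemmas, and it is worth recording where the difficulty actually lies. For $q\geq 6$ one transports the argument of \cite{CpCq} for $C_p\square C_q$ more or less verbatim: the obstructing perfect matching there is built around the edge set $S$ of items (i)--(iv), all of whose edges survive in $A[n,k]$ under the grid-like drawing of Remark~\ref{RemarkDrawing}, so only routine relabelling is required. The case $q=5$ is the genuinely delicate one, because the $C_p\square C_q$ proof exploits the edges $e_ia_i$, which do not exist in $A[n,k]$; here one must instead re-run the bracket bookkeeping summarised in Lemma~\ref{Lem-CpCq}. The main obstacle --- located in Lemma~\ref{lemma gcd=5} rather than in the present theorem --- is to show that a single right-edge $R_\beta\in N$ propagates, via the observation that the neighbour $a_\theta$ of $e_\beta$ can only be reached by a left bracket, together with parts (iii) and (v) of Lemma~\ref{Lem-CpCq}, so as to force $\mathcal{R}\cap N=\mathcal{R}$; this then collides with $\mathcal{L}\cap N\neq\emptyset$ and contradicts the Hamiltonicity of $H$. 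Once both lemmas are secured, combining them as above completes the proof of the theorem.
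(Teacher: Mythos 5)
Your proof is correct and matches the paper exactly: the theorem is obtained as an immediate combination of Lemma~\ref{lemma gcd>=6} (for $\gcd(n,k)\geq 6$) and Lemma~\ref{lemma gcd=5} (for $\gcd(n,k)=5$), which is precisely the dichotomy you describe. Your accompanying summary of where the real work lies in those two lemmas is also accurate.
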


\section{Accordions and circulant graphs}\label{Sect-Circulant}

For integers $a$ and $b$, where $1\leq a<b\leq n-1$, $\textrm{Ci}[2n,\{a,b\}]$ denotes the \emph{quartic circulant graph} on the vertices $\{x_{i}:i\in[2n]\}$, such that $x_i$ is adjacent to the vertices in the set $\{x_{i+a},x_{i-a},x_{i+b}, x_{i-b}\}$.  We say that the edges arising from these adjacencies have length $a$ and $b$, accordingly, and we also remark that operations in the indices of the vertices $x_{i}$ are taken modulo $2n$, with complete residue system $\{1,\ldots, 2n\}$. In \cite{CpCqCirculant}, the necessary and sufficient conditions for a circulant graph to be isomorphic to a Cartesian product of two cycles were given. Motivated by this, we show that there is a non-empty intersection between the class of accordions $A[n,k]$ and the class of circulant graphs $\textrm{Ci}[2n,\{a,b\}]$, although neither one is contained in the other. In particular, in Theorem \ref{theorem accordion char circ} we show that the only accordion graphs $A[n,k]$ which are not circulant are those with both $n$ and $k$ even, such that $k\geq 4$. Results of a similar flavour about $4$-regular circulant graphs, perfect matchings and Hamiltonicity can be found in \cite{HerkeMaenhaut}.

We shall be using the following two results about circulant graphs (not necessarily quartic). Let the circulant graph on $n'$ vertices and with $r$ different edge lengths be denoted by $\textrm{Ci}[n',\{a_{1},\ldots,a_{r}\}]$. The first result, implied by a classical result in number theory, says that $\textrm{Ci}[n',\{a_{1},\ldots,a_{r}\}]$ has $\gcd(n',a_{1}, \ldots, a_{r})$ isomorphic connected components (see \cite{boeschtindell}). Consequently, in our case we have that the circulant graph $\textrm{Ci}[2n,\{a,b\}]$ is connected if and only if $\gcd(2n,a,b)=1$. Secondly, let $\gcd(n',a_{1}, \ldots, a_{r})=1$. Heuberger \cite{CirculantBipartite} showed that $\textrm{Ci}[n',\{a_{1},\ldots,a_{r}\}]$ is bipartite if and only if $a_{1}, \ldots, a_{r}$ are odd and $n'$ is even. Restated for our purposes we have the following.

\begin{corollary}\label{remark heuberger}
The quartic circulant graph $\textrm{Ci}[2n,\{a,b\}]$ is bipartite if and only if $a$ and $b$ are both odd.
\end{corollary}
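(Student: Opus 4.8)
The final statement to prove is Corollary~\ref{remark heuberger}: the quartic circulant $\textrm{Ci}[2n,\{a,b\}]$ is bipartite if and only if $a$ and $b$ are both odd. The plan is to derive this directly from Heuberger's cited theorem, which states that (when $\gcd(n',a_1,\ldots,a_r)=1$) the circulant $\textrm{Ci}[n',\{a_1,\ldots,a_r\}]$ is bipartite if and only if all the $a_i$ are odd and $n'$ is even. So the only real content here is bookkeeping about the ambient order being $n'=2n$, which is manifestly even, together with handling the coprimality hypothesis that Heuberger's result needs.

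First I would observe that bipartiteness is a property that can be checked component by component: a graph is bipartite if and only if each of its connected components is bipartite, and by the Bo\v{e}sch--Tindell result quoted just above, $\textrm{Ci}[2n,\{a,b\}]$ has $g:=\gcd(2n,a,b)$ isomorphic connected components, each of which is itself a circulant graph $\textrm{Ci}[2n/g,\{a/g,b/g\}]$ with $\gcd(2n/g,a/g,b/g)=1$. Hence it suffices to prove the statement for the connected case and then check that passing to $2n/g$ and dividing $a,b$ by $g$ preserves the parity conditions being claimed — which requires a short parity argument on $g$.

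The key case split is on the parity of $g=\gcd(2n,a,b)$. If $g$ is odd, then dividing by $g$ leaves $2n/g$ even and leaves the parities of $a/g$ and $b/g$ equal to those of $a$ and $b$; Heuberger's theorem applied to the (connected) component then gives bipartiteness iff $a/g,b/g$ are both odd iff $a,b$ are both odd, and since all components are isomorphic this transfers to the whole graph. If $g$ is even, then $a$ and $b$ are both even (they are divisible by $g$), so the right-hand side ``$a,b$ both odd'' is false; on the other hand $2n/g$ may be even or odd, but either way we can conclude the graph is \emph{not} bipartite: if $2n/g$ is odd, Heuberger's theorem forces non-bipartiteness of each component; if $2n/g$ is even, then $\gcd(2n/g,a/g,b/g)=1$ forces at least one of $a/g,b/g$ odd, again giving non-bipartiteness by Heuberger. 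Either way the equivalence holds vacuously-on-one-side, completing the argument.

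I expect the main (very minor) obstacle to be purely organisational: making sure the reduction to the connected case is airtight, i.e.\ that Heuberger's hypothesis $\gcd(\cdot)=1$ is genuinely met after dividing out $g$, and that one correctly tracks how the parity of $g$ interacts with the parities of $n$, $a$, and $b$. There is no deep idea here — everything reduces to the two cited theorems plus elementary divisibility — so the write-up is short; the only thing to be careful about is not to invoke Heuberger without first ensuring connectivity (equivalently coprimality), which is why the component decomposition step must come first.
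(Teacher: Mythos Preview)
Your case analysis has a genuine gap. In the subcase $g$ even with $2n/g$ even, you deduce from $\gcd(2n/g,a/g,b/g)=1$ that at least one of $a/g,b/g$ is odd, and then claim this gives non-bipartiteness by Heuberger. That inference is backwards: Heuberger's criterion for non-bipartiteness is that at least one length be \emph{even} (or the order be odd). Nothing in your hypotheses rules out \emph{both} $a/g$ and $b/g$ being odd, and in that situation each component is bipartite. Concretely, $\textrm{Ci}[16,\{2,6\}]$ (which satisfies $1\le 2<6\le n-1=7$ and is genuinely quartic) has $g=2$, $2n/g=8$, $a/g=1$, $b/g=3$; each of its two components is $\textrm{Ci}[8,\{1,3\}]$, which is bipartite. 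So this graph is a bipartite quartic circulant with $a,b$ both even: the corollary is simply false without the connectivity hypothesis, and your attempted extension to the disconnected case cannot be repaired.

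The paper does not try to cover the disconnected case at all. It states Heuberger's theorem under the standing assumption $\gcd(n',a_1,\ldots,a_r)=1$ set up in the preceding sentence, and the phrase ``Restated for our purposes'' just specializes $n'=2n$, which is automatically even, so Heuberger's condition collapses to ``$a$ and $b$ both odd''. The only place the corollary is invoked is for a circulant isomorphic to the connected graph $A[n,k]$, where $\gcd(2n,a,b)=1$ holds automatically. Your write-up should either carry the connectivity hypothesis explicitly or restrict to that case; once you do, the proof is the one-line specialization the paper intends, and the whole component-decomposition apparatus is unnecessary.
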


Along similar lines, we show that bipartite accordion graphs can be recognised from the parity of their parameters $n$ and $k$, as follows.

\begin{lemma}\label{LemmaBipartite}
The accordion graph $A[n,k]$ is bipartite if and only if $n$ and $k$ are both even.
\end{lemma}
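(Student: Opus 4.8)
The plan is to prove the two implications separately, exhibiting an explicit proper $2$-colouring in one direction and an odd cycle in the other. Suppose first that $n$ and $k$ are both even. I would define a colouring $c$ of the vertex set by setting $c(u_i)=i\bmod 2$ and $c(v_i)=(i+1)\bmod 2$ for $i\in[n]$, so that each $v_i$ receives the colour opposite to that of $u_i$, and then verify that $c$ is proper by running through the four types of edge in Definition~\ref{def AccordionGraphs}. The outer-cycle edges $u_iu_{i+1}$ and the inner-cycle edges $v_iv_{i+1}$ are bichromatic because consecutive integers have opposite parity; here the evenness of $n$ is what makes the colouring consistent with arithmetic modulo $n$, so that the ``wrap-around'' edges $u_nu_1$ and $v_nv_1$ are bichromatic as well. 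The vertical spokes $u_iv_i$ are bichromatic by the definition of $c$. The only edges for which the evenness of $k$ is needed are the diagonal spokes $u_iv_{i+k}$: here $c(u_i)=i\bmod 2$ while $c(v_{i+k})=(i+k+1)\bmod 2=(i+1)\bmod 2$ since $k$ is even. Hence $c$ is a proper $2$-colouring and $A[n,k]$ is bipartite.

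For the converse, I would show that whenever $n$ or $k$ is odd the graph contains an odd cycle. If $n$ is odd, the outer cycle $(u_1,u_2,\ldots,u_n)$ is already an odd cycle. If $k$ is odd, consider the closed walk that starts at $u_1$, uses the vertical spoke $u_1v_1$, follows the inner-cycle path $v_1,v_2,\ldots,v_{1+k}$, and returns to $u_1$ along the diagonal spoke $u_1v_{1+k}$. Since $0<k\le\frac{n}{2}$ and $n\ge 3$ we have $1+k<n$, so the vertices $v_1,v_2,\ldots,v_{1+k}$ are pairwise distinct and no wrap-around occurs; thus this walk is a genuine cycle of length $k+2$, which is odd. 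In either case $A[n,k]$ is not bipartite, and combining the two implications gives the stated equivalence.

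I do not anticipate a serious obstacle. The only points needing care are routine: checking that the $(k+2)$-cycle in the converse is indeed a cycle, which reduces to the elementary inequality $1+k<n$ coming from $0<k\le\frac{n}{2}$ and $n\ge 3$; and confirming that the colouring $c$ in the first part is well defined on $[n]$ and compatible with reduction of indices modulo $n$ --- which is exactly where the hypothesis that $n$ is even intervenes. (One could instead try to deduce the result from Remark~\ref{RemarkDrawing}, but that only produces a bipartite spanning subgraph and does not control the deleted edges, so the direct approach seems cleaner.)
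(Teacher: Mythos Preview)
Your proposal is correct and follows essentially the same approach as the paper: your $2$-colouring is exactly the bipartition $\{u_1,v_2,u_3,\ldots\}\cup\{v_1,u_2,v_3,\ldots\}$ that the paper writes down, and the odd cycles you exhibit in the converse (the outer $n$-cycle when $n$ is odd, and the $(k+2)$-cycle $u_1,v_1,\ldots,v_{1+k}$ when $k$ is odd) are precisely those used in the paper, with the latter merely traversed in the opposite direction. Your write-up is a bit more explicit in verifying the details, but there is no substantive difference in strategy.
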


\begin{proof}
When $n$ and $k$ are even, the sets $\{u_{1},v_{2}, u_{3},v_{4},\ldots,u_{n-1}, v_{n}\}$ and $\{v_{1}, u_{2},\linebreak  v_{3}, u_{4},\ldots,v_{n-1},u_{n}\}$ are two independent sets of vertices of $A[n,k]$, implying that for these values of $n$ and $k$, $A[n,k]$ is bipartite. On the other hand, assume $A[n,k]$ is bipartite. Suppose that at least one of $n$ and $k$ are odd, for contradiction. If $n$ is odd it means that $(u_{1}, u_{2},\ldots, u_{n})$ is an odd cycle of $A[n,k]$, a contradiction. If $k$ is odd, then $(u_{1}, v_{1+k}, v_{k},\ldots, v_{1})$ is an odd cycle of length $k+2$, a contradiction as well.
\end{proof}

Before proceeding, we recall that operations in the indices of the vertices of $A[n,k]$ and $\textrm{Ci}[2n,\{a,b\}]$ are taken modulo $n$ and modulo $2n$, respectively.

\begin{lemma}\label{LemmaAn2}
The accordion graph $A[n,2]$ is circulant for any even integer $n\geq 4$.
\end{lemma}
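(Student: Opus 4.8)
The plan is to prove the lemma by exhibiting an explicit isomorphism $A[n,2]\cong\textrm{Ci}[2n,\{1,n-1\}]$. The candidate pair $\{1,n-1\}$ is easy to guess from small cases: $A[4,2]$ is $K_{4,4}=\textrm{Ci}[8,\{1,3\}]$, and a direct inspection of $A[6,2]$ (which is $K_{6,6}$ minus three disjoint copies of $K_{2,2}$) reveals $\textrm{Ci}[12,\{1,5\}]$; note that $n-1$ is odd because $n$ is even, which is the parity required by Lemma~\ref{LemmaBipartite} and Corollary~\ref{remark heuberger} for both graphs to be bipartite.

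Conceptually, the cleanest route is to recognise that both graphs arise as the same blow-up of $C_n$. From the remark at the start of Subsection~\ref{subsection an2}, $A[n,2]$ is isomorphic, via $u_i\mapsto s_i$ and $v_{i+1}\mapsto t_i$, to the graph $G$ obtained from $C_n$ by replacing each vertex $i$ by two non-adjacent vertices $s_i,t_i$ and replacing each edge of $C_n$ by the four edges joining the corresponding pairs. On the circulant side I would rewrite the connection set as $\{\pm1,\pm(n-1)\}\equiv\{-1,1,n-1,n+1\}\pmod{2n}$, so that the neighbours of a vertex $x$ of $\textrm{Ci}[2n,\{1,n-1\}]$ are precisely $x\pm1$ and $(x+n)\pm1$. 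Grouping the $2n$ vertices into the $n$ pairs $P_x=\{x,x+n\}$, one checks that each $P_x$ is an independent set, that consecutive pairs $P_x$ and $P_{x+1}$ induce a $K_{2,2}$, and that there are no further edges; hence the pairs $P_x$ behave exactly like the ``clouds'' $\{s_x,t_x\}$ and $\textrm{Ci}[2n,\{1,n-1\}]\cong G\cong A[n,2]$.

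If a fully explicit map is preferred, I would instead define $\phi\colon V(A[n,2])\to\{1,\dots,2n\}$ by $\phi(u_i)=i$ for $i\in\{1,\dots,n-1\}$, $\phi(u_n)=2n$, and $\phi(v_i)=n+i-1$ for $i\in\{1,\dots,n\}$, check that $\phi$ is a bijection, and verify that each of the four edge families $u_iu_{i+1}$, $v_iv_{i+1}$, $u_iv_i$, $u_iv_{i+2}$ is sent to a pair of vertices whose difference modulo $2n$ lies in $\{\pm1,\pm(n-1)\}$, treating the handful of indices that wrap around modulo $n$ separately. Since $n\geq4$ makes $1,\ n-1,\ n+1,\ 2n-1$ four distinct residues and $\gcd(n-1,2n)=1$, the graph $\textrm{Ci}[2n,\{1,n-1\}]$ is genuinely quartic with $4n$ edges, so a bijection carrying all $4n$ edges of $A[n,2]$ into it is automatically an isomorphism. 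I do not expect a real obstacle here: the only content is pinning down the parameters $\{1,n-1\}$ and the correct alignment between the two index sets, after which the verification is routine bookkeeping, with the one mild subtlety being that on the ``wrap-around'' edges the roles of length $1$ and length $n-1$ within a family get interchanged.
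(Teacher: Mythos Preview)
Your proposal is correct, and the target $\textrm{Ci}[2n,\{1,n-1\}]$ is exactly the circulant the paper uses, but your route to the isomorphism is noticeably cleaner than the paper's. The paper writes down an explicit bijection sending $u_i\mapsto x_{1+(n-1)(i-1)}$ and (essentially) $v_i\mapsto x_{2+(n-1)(i-1)}$ modulo $2n$, then works through four separate edge families with several sub-cases each to check that every image difference lies in $\{\pm1,\pm(n-1)\}$; this works, but the multiplication by $n-1$ obscures why. Your blow-up description---grouping the circulant vertices into the antipodal pairs $P_x=\{x,x+n\}$, observing that each $P_x$ is independent and that $P_x\cup P_{x+1}$ induces a $K_{2,2}$---identifies both graphs structurally as the lexicographic product $C_n[\,\overline{K_2}\,]$, which makes the isomorphism conceptually obvious and recovers the Ok--Perrett description from Subsection~\ref{subsection an2} directly. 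Your alternative explicit map $\phi(u_i)=i$ (with $\phi(u_n)=2n$) and $\phi(v_i)=n+i-1$ is also simpler to verify than the paper's, with fewer wrap-around cases. What the paper's choice buys, if anything, is that it is visibly the map ``walk along the length-$(n-1)$ edges'', which fits the general pattern used later in Theorem~\ref{theorem accordion char circ}; what your approach buys is a transparent reason the result is true rather than just a verification.
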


\begin{proof}

For every even integer $n\geq 4$, we claim that the following function $\phi:V(A[n,2])\rightarrow V(\textrm{Ci}[2n,\{1,n-1\}])$ defined by:
\begin{itemize}
\item $\phi: u_{i}\mapsto x_{1+(n-1)(i-1)\pmod{2n}}$ for all $i\in[n]$,
\item $\phi: v_{1}\mapsto x_{2+n\pmod{2n}}$, and
\item $\phi: v_{i}\mapsto x_{2+(n-1)(i-1)\pmod{2n}}$ for $2\leq i\leq n$,
\end{itemize}
is an isomorphism. We first show that the function $\phi$ is bijective. Since $n-1\geq 3$ is odd and $\gcd(n,n-1)=1$, we have that $\gcd(2n,n-1)=1$, and so the vertices in $\{\phi(u_{i}):1\leq i\leq n\}$ are mutually distinct. For the same reason, since $2+n\equiv 1+(n-1)(2n-1)\pmod{2n}$, the vertex $\phi(v_{1})$ is distinct from any vertex $\phi(u_{i})$. Moreover, since $1\neq n-1$ and the vertices $\phi(u_{2}),\ldots,\phi(u_{n})$ are mutually distinct, the vertices in $\{\phi(v_{j}):2\leq j\leq n\}$ are mutually distinct as well, and are not equal to some vertex $\phi(u_{i})$. Finally, since $\gcd(n,n-1)=1$, we have that $2+(n-1)(i-1)\not\equiv 2+n\pmod{2n}$, for any $2\leq i\leq n$. Consequently, $\phi(v_{1})$ is distinct from any vertex $\phi(v_{j})$, proving that $\phi$ is, in fact, bijective. We next show that $\phi$ is an isomorphism. Since $\textrm{Ci}[2n,\{1,n-1\}]$ has the same number of edges as $A[n,2]$, it suffices to show that an edge in $A[n,2]$ is mapped to an edge in $\textrm{Ci}[2n,\{1,n-1\}]$.

We first take an edge $u_{i}u_{j}$ from the outer-cycle of $A[n,2]$, for some $i\in[n]$ and $j\equiv i+1\pmod{n}$, without loss of generality. Consider $\phi(u_{i})\phi(u_{j})$. The length of $\phi(u_{i})\phi(u_{j})$ can be calculated using $1+(n-1)(j-1)-(1+(n-1)(i-1))\pmod{2n}$. This is equal to $n-1$, when $j\neq 1$, and to $-n^{2}+2n-1\equiv -1\pmod{2n}$, otherwise. Since in both cases the length of the edge $\phi(u_{i})\phi(u_{j})$ belongs to $\{\pm 1, \pm(n-1)\}$, $\phi (u_{i})\phi (u_{j})\in E(\textrm{Ci}[2n,\{1,n-1\}])$.

Next, we consider the vertical spoke $u_{i}v_{i}$, for some $i\in [n]$. The length of the edge $\phi(u_{i})\phi(v_{i})$ is $2+n-(1+(n-1)(i-1))\equiv -(n-1)\pmod{2n}$, when $i=1$, and $2+(n-1)(i-1)-(1+(n-1)(i-1))=1$, otherwise, implying that the length of the edge $\phi(u_{i})\phi(v_{i})$ belongs to $\{\pm 1, \pm(n-1)\}$ in both cases. Consequently, $\phi (u_{i})\phi (v_{i})\in E(\textrm{Ci}[2n,\{1,n-1\}])$.

We now take the diagonal spoke $u_{i}v_{j}$, for some $i\in[n]$, and for $j\equiv i+2\pmod{n}$. The length of the edge $\phi(u_{i})\phi(v_{j})$ is:
\begin{itemize}
\item $2+(n-1)(j-1)-(1+(n-1)(i-1))= 2n-1\equiv -1 \pmod{2n}$, when $1\leq i\leq n-2$;
\item $2+n-(1+(n-1)(i-1))= -n^{2}+4n-1 \equiv -1\pmod{2n}$, when $i=n-1$; and
\item $2+(n-1)(j-1)-(1+(n-1)(i-1))= -n^{2}+3n-1 \equiv n-1\pmod{2n}$, when $i=n$.
\end{itemize}
In each of these three cases, the length of $\phi(u_{i})\phi(v_{j})$ belongs to $\{\pm 1, \pm(n-1)\}$, implying that $\phi (u_{i})\phi (v_{j})\in E(\textrm{Ci}[2n,\{1,n-1\}])$.

Finally, we take an edge $v_{i}v_{j}$ from the inner-cycle of $A[n,2]$, for some $i\in[n]$ and $j\equiv i+1\pmod{n}$, without loss of generality. The length of the edge $\phi(v_{i})\phi(v_{j})$ is:

\begin{itemize}
\item $2+n-(2+(n-1)(i-1))= -n^{2}+3n-1\equiv n-1\pmod{2n}$, when $j=1$;
\item $2+(n-1)(j-1)-(2+n)=-1$, when $j=2$; and
\item $2+(n-1)(j-1)-(2+(n-1)(i-1))=n-1$, when $3\leq j \leq n$.
\end{itemize}
This implies that the length of the edge $\phi(v_{i})\phi(v_{j})$ belongs to $\{\pm 1, \pm(n-1)\}$ in both cases, and so $\phi (v_{i})\phi (v_{j})\in E(\textrm{Ci}[2n,\{1,n-1\}])$.\\

There are no more cases to consider, proving our result.
\end{proof}

We next show that the only circulant accordions $A[n,k]$ with both $n$ and $k$ even are the ones having $k=2$. By Lemma \ref{LemmaBipartite}, this means that the only circulant bipartite accordions are the ones with $n$ even and $k=2$.

\begin{lemma}\label{Lemma n,k even}
For $n$ and $k$ even, the accordion graph $A[n,k]$ is circulant if and only if $k=2$.
\end{lemma}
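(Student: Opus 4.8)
The plan is to prove the two directions separately. The "if" direction is already handled by Lemma~\ref{LemmaAn2}, which shows $A[n,2]$ is circulant for every even $n\geq 4$; so the work is entirely in the "only if" direction, namely showing that if $n$ and $k$ are both even with $k\geq 4$, then $A[n,k]$ is \emph{not} circulant. I would argue by contradiction: assume $A[n,k]$ is isomorphic to some $\textrm{Ci}[2n,\{a,b\}]$. Since $A[n,k]$ is connected, $\gcd(2n,a,b)=1$, and since $A[n,k]$ is bipartite by Lemma~\ref{LemmaBipartite} (as $n,k$ are even), Corollary~\ref{remark heuberger} forces both $a$ and $b$ to be odd. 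This is the structural foothold: every edge of the circulant graph joins a vertex of even index to a vertex of odd index, and the bipartition classes are exactly the even-indexed and odd-indexed vertices.

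The key idea I would exploit is a girth/short-cycle count that is invariant under isomorphism. In a bipartite quartic circulant $\textrm{Ci}[2n,\{a,b\}]$ with $a,b$ odd, the shortest cycles are $4$-cycles, and these come in very restricted families: the "parallelogram" $4$-cycles $x_i, x_{i+a}, x_{i+a+b}, x_{i+b}$ (one through each vertex in each of $b$ "directions", provided $a\neq \pm b$), plus possibly extra $4$-cycles if $2a\equiv \pm 2b$, $4a\equiv 0$, or $4b\equiv 0 \pmod{2n}$, i.e. degenerate length relations. So I would count the number of $4$-cycles (equivalently, the number of pairs of vertices joined by two internally disjoint paths of length $2$, or the number of $C_4$ subgraphs) in $A[n,k]$ and compare. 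In $A[n,k]$ for $k\geq 4$ (and $k\leq n/2$, $k$ even), the $4$-cycles are built from consecutive spokes and cycle edges: each square $u_i, u_{i+1}, v_{i+1+k}, v_{i+k}$ formed by two diagonal spokes and two cycle edges is a $4$-cycle, and each square $u_i, v_i, v_{i+1}, u_{i+1}$ wait — that one uses $u_iv_i$, $v_iv_{i+1}$, $v_{i+1}u_{i+1}$ and needs $u_iu_{i+1}$, so it is a $4$-cycle too; also $u_i,v_i,\ldots$ — I would carefully enumerate: the genuine $4$-cycles are the $n$ "vertical" squares $(u_i,u_{i+1},v_{i+1},v_i)$, the $n$ "diagonal" squares $(u_i,u_{i+1},v_{i+1+k},v_{i+k})$, and for $k=2$ only, also squares mixing a vertical and a diagonal spoke (this is precisely why $k=2$ is special). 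The plan is to show that for $k\geq 4$ the only $4$-cycles are those $2n$ squares, whereas a bipartite $\textrm{Ci}[2n,\{a,b\}]$ with $a,b$ odd always has either a different count of $4$-cycles, or its $4$-cycles fail the adjacency/overlap pattern that the accordion's do (e.g. in the circulant every vertex lies on exactly two or four $4$-cycles in a translation-symmetric way, and the two "directions" at a vertex behave symmetrically, whereas in $A[n,k]$ the four edges at a vertex split asymmetrically into two outer/inner-cycle edges and two spokes).

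More concretely, I would compute, for each vertex $x$ of $\textrm{Ci}[2n,\{a,b\}]$, the local $4$-cycle structure and show it cannot match $A[n,k]$. A cleaner route: count paths of length $2$ with distinct endpoints and look at which pairs of endpoints are joined by two such paths. In $\textrm{Ci}[2n,\{a,b\}]$, translation-invariance means the "$C_4$-graph" (vertices joined by $\geq 2$ common neighbours) is itself a circulant, typically $\textrm{Ci}[2n,\{2a,2b\}]$ together with possible extra lengths $a+b,a-b$; its component structure and regularity are rigidly determined. In $A[n,k]$, I would show the analogous "two-common-neighbours" graph is not vertex-transitive for $k\geq 4$ — for instance, a vertex $u_i$ sees $u_{i+2}$ and $v_{i+1}$ via two paths but the local picture around $u_i$ differs from that around $v_i$ in a way no circulant can have, since in a circulant all vertices are equivalent. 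Establishing that $A[n,k]$ is \emph{not} vertex-transitive (equivalently, not a Cayley graph on a cyclic group) for $k\geq 4$, using a local invariant distinguishing $u$-vertices from $v$-vertices (such as the multiset of girth-cycle memberships of the four incident edges, or the number of $4$-cycles through each edge), would immediately give the result, and this is the step I expect to be the main obstacle: pinning down a local invariant that is provably different at $u_i$ and $v_i$ for all even $k\geq 4$ but coincides when $k=2$, and doing so uniformly in $n$. Once that local asymmetry is in hand, non-circularity follows because every circulant graph is vertex-transitive, contradicting the isomorphism.
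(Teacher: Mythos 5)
Your overall frame (reduce to the ``only if'' direction via Lemma~\ref{LemmaAn2}, then use Lemma~\ref{LemmaBipartite} and Corollary~\ref{remark heuberger} to force $a$ and $b$ to be odd) matches the paper, but the engine you propose for the contradiction cannot work. The step you yourself flag as the main obstacle --- finding a local invariant that distinguishes $u$-vertices from $v$-vertices, so that $A[n,k]$ fails to be vertex-transitive for $k\geq 4$ --- is unattainable: the map $\sigma$ given by $\sigma(u_i)=v_{-i}$ and $\sigma(v_i)=u_{-i}$ is an automorphism of $A[n,k]$ for \emph{every} admissible $n$ and $k$ (it sends $u_iu_{i+1}$ to the inner-cycle edge $v_{-i-1}v_{-i}$, $v_iv_{i+1}$ to $u_{-i-1}u_{-i}$, $u_iv_i$ to the vertical spoke $u_{-i}v_{-i}$, and the diagonal spoke $u_iv_{i+k}$ to $u_{-i-k}v_{(-i-k)+k}$, again a diagonal spoke). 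Together with the rotation $u_i\mapsto u_{i+1}$, $v_i\mapsto v_{i+1}$ this yields a transitive (indeed regular, dihedral) action on the vertices, so $A[n,k]$ is always vertex-transitive and no local invariant can separate $u_i$ from $v_j$; non-circularity cannot be deduced from failure of vertex-transitivity.

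Your fallback, counting $4$-cycles, also does not close the argument as stated: for odd $a,b$ with $\gcd(2n,a)=\gcd(2n,b)=1$ the circulant $\textrm{Ci}[2n,\{a,b\}]$ generically has exactly $2n$ parallelogram $4$-cycles $x_i,x_{i+a},x_{i+a+b},x_{i+b}$, matching the $2n$ squares of $A[n,k]$, so the raw count is not an obstruction (and your enumeration of the accordion's $4$-cycles misses the extra ones arising when $k=n/2$). What actually has to be done --- and what the paper does --- consists of two things your proposal does not supply: first, a structural argument (via an induced $C_p\square P_{2n/p}$ subgraph and Remark~\ref{RemarkDrawing}) establishing the claim $\gcd(2n,a)=\gcd(2n,b)=1$, so that each length class induces a Hamiltonian cycle; and second, a case analysis of the possible assignments of lengths $a,b$ to the four edges of the specific square $(u_1,u_2,v_2,v_1)$, with each case eliminated by a congruence such as $a\equiv\pm 3b$ together with $b\equiv\pm 3a\pmod{2n}$ forcing $2n\in\{8,10\}$, or by $a+b=n$ forcing $k=2$, or by the Hamiltonicity of each length class. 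Your remark about the ``adjacency/overlap pattern'' of $4$-cycles gestures in this direction, but none of it is carried out, so the proof has a genuine gap.
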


\begin{proof}
Let $n$ and $k$ be even. By Lemma \ref{LemmaAn2}, it suffices to show that accordion graphs admitting $k\geq 4$ are not circulant. We can further assume that $n\geq 8$, since the only accordions with $n=4$ or $6$, and $k$ even are $A[4,2]$ and $A[6,2]$. Suppose, for contradiction, that for $n\geq 8$, there exists an even integer $k\geq 4$ such that $A[n,k]$ is circulant. Then, by Corollary \ref{remark heuberger} and Lemma \ref{LemmaBipartite}, $A[n,k]\simeq  \textrm{Ci}[2n,\{a,b\}]$ for some distinct odd integers $a$ and $b$. For simplicity, we refer to $A[n,k]$, or equivalently $\textrm{Ci}[2n,\{a,b\}]$, by $G$. By the definition of quartic circulant graphs, we recall that $1\leq a<b\leq n-1$. \\

\noindent\textbf{Claim.} $\gcd(2n,a)=\gcd(2n,b)=1$.

\noindent\emph{Proof of Claim.} Suppose that $\gcd(2n,a)\not=1$, for contradiction. Then, the least common multiple of $a$ and $2n$ is $2na'$, for some $a'<a$. Consequently, there exists an even integer $p$ such that $ap=2na'$. Moreover, since $a\neq a'$ and $a$ is odd, $\frac{a}{a'}$ (or equivalently $\frac{2n}{p}$) is odd and at least $3$, and so $p<n$.
By considering the edges in $G$ having length $a$, there exists a partition $\mathcal{P}$ of the $2n$ vertices of $G$ into $\frac{2n}{p}$ sets, each inducing a $p$-cycle. This follows since $\gcd(2n,a)=\frac{2n}{p}\not=1$. Furthermore, $\mathcal{P}$ has an odd number of components, namely $\gcd(2n,a)$, or equivalently $\frac{2n}{p}$.

Since $G$ is a connected quartic graph and $\frac{2n}{p}>1$, two vertices on a particular $p$-cycle in $\mathcal{P}$ are adjacent in $G$ if and only if there is an edge of length $a$ between them, in other words, the subgraph induced by the vertices on a $p$-cycle in $\mathcal{P}$ is the $p$-cycle itself.
Therefore, the graph contains two adjacent vertices $x_{i}$ and $x_{j}$ belonging to two different $p$-cycles from $\mathcal{P}$. Consequently, since $i\equiv j\pm b\pmod{2n}$, the vertices of these two $p$-cycles induce $C_{p}\square P_{2}$. By a similar argument to that used on $x_{i}$ and $x_{j}$, we deduce that $G$ contains a spanning subgraph $G_{0}$ isomorphic to $C_{p}\square P_{\frac{2n}{p}}$.

We now denote the set $\{u_{1},u_{2},\ldots, u_{n}\}$ of vertices on the outer-cycle of $A[n,k]$ by $\mathcal{U}$, and the set $\{v_{1},v_{2},\ldots, v_{n}\}$ of vertices on the inner-cycle of $A[n,k]$ by $\mathcal{V}$, and claim that:
\begin{enumerate}[(i)]
\item given two adjacent vertices from some $p$-cycle in $\mathcal{P}$, say $x_{i}$ and $x_{i+a}$, if $x_{i}$ is a vertex in $\mathcal{U}$, then $x_{i+a}$ is a vertex in $\mathcal{V}$, or vice-versa; and
\item given two adjacent vertices from two different $p$-cycles in $\mathcal{P}$, say $x_{i}$ and $x_{i+b}$, we have that either both belong to $\mathcal{U}$ or both belong to $\mathcal{V}$.
\end{enumerate}
First of all, we note that the vertices inducing a $p$-cycle from $\mathcal{P}$, cannot all belong to $\mathcal{U}$, since the latter set of vertices induces a $n$-cycle, and $p<n$. Similarly, the vertices inducing a $p$-cycle from $\mathcal{P}$, cannot all belong to $\mathcal{V}$. Secondly, let $i\in[2n]$, such that $x_{i}$ is of degree $3$ in $G_{0}$, and $x_{i}x_{i+b}\in E(G_{0})$. Consider the $4$-cycle $(x_{i}, x_{i+a},x_{i+a+b}, x_{i+b})$. Since $n>4$, these four vertices cannot all belong to $\mathcal{U}$ (or $\mathcal{V}$). Also, we cannot have three of them which belong to $\mathcal{U}$ (or $\mathcal{V}$), because otherwise we would have $k=2$, or, $k\equiv -2\pmod n$, that is, $k=n-2$. Since we are assuming that $k\geq 4$, we must have $k=n-2$, but by Definition \ref{def AccordionGraphs}, $k$ is at most $\frac{n}{2}$, and so, $n-2\leq\frac{n}{2}$, a contradiction, since $n\geq 8$.
This means that exactly two vertices from $(x_{i}, x_{i+a},x_{i+a+b}, x_{i+b})$ belong to $\mathcal{U}$, and the other two belong to $\mathcal{V}$. Without loss of generality, assume that $x_{i}$ belongs to $\mathcal{U}$.

Suppose that $x_{i+b}\not\in\mathcal{U}$, for contradiction. Then, $\mathcal{U}$ must contain exactly one of $x_{i+a}$ and $x_{i+a+b}$. Suppose we have $x_{i+a+b}\in\mathcal{U}$. Consequently, $x_{i+a}$ and $x_{i+b}$ belong to $\mathcal{V}$, and so since $\frac{2n}{p}\geq 3$, $x_{i+2a+b}$ and $x_{i+a+2b}$ belong to $\mathcal{U}$, giving rise to a $4$-cycle in $G_{0}$ with three of its vertices belonging to $\mathcal{U}$, a contradiction.
Therefore, we have $x_{i+a}\in\mathcal{U}$. This means that $x_{i+b}$ and $x_{i+a+b}$ both belong to $\mathcal{V}$. Suppose further that $x_{i+2a}\in\mathcal{V}$. Since we cannot have three vertices in a $4$-cycle belonging to $\mathcal{V}$, $x_{i+2a+b}\in \mathcal{U}$. However, this once again gives rise to a $4$-cycle in $G_{0}$ with three of its vertices belonging to $\mathcal{U}$, a contradiction. Therefore, $x_{i+2a}$ must belong to $\mathcal{U}$. By repeating the same argument used for $x_{i+2a}$ to the vertices $x_{i+3a},\ldots, x_{i+(p-1)a}$, we get that all the vertices in the $p$-cycle, from $\mathcal{P}$, containing $x_{i}$ belong to $\mathcal{U}$, a contradiction. Hence, $x_{i+a}\not\in\mathcal{U}$. Thus, neither one of $x_{i+a}$ and $x_{i+a+b}$ is in $\mathcal{U}$, contradicting our initial assumption. This implies that $x_{i+b}\in\mathcal{U}$, and that $x_{i\pm a}$ and $x_{i+b\pm a}$ belong to $\mathcal{V}$. This forces all the vertices not considered so far to satisfy the two conditions in the above claim.

Thus, by Remark \ref{RemarkDrawing}, $\frac{2n}{p}=\gcd(n,k)$. This is a contradiction, since $\frac{2n}{p}$ is odd and the greatest common divisor of two even numbers is even. Hence, $\gcd(2n,a)=1$, and by a similar reasoning, $\gcd(2n,b)=1$ as well. \,\,\,{\tiny$\blacksquare$}\\

This implies that $a$ does not divide $n$, $b$ does not divide $n$, and that the edges of $G$ can be partitioned in two Hamiltonian cycles induced by the edges having length $a$ and $b$, respectively.

In particular, since $a$ does not divide $2n$, there exists an edge of length $a$ with both endvertices belonging to $\{u_{i}:i\in[n]\}\subset V(A[n,k])$. Without loss of generality, assume that $u_{1}u_{2}$ has length $a$, and consider the $4$-cycle $C=(u_{1},u_{2},v_{2},v_{1})$. Since the edges having length $a$ (and similarly the edges having length $b$) induce a Hamiltonian cycle, and $n>4$, the lengths of the edges in $C$ cannot all be the same. Hence, the lengths of the edges $(u_{1}u_{2}, u_{2}v_{2}, v_{2}v_{1}, v_{1}u_{1})$ of $C$ can be of
Type A1 $:=(a,b,b,b)$, Type A2 $:=(a,a,b,a)$, Type A3 $:=(a,a,a,b)$, Type A4 $:=(a,b,a,a)$, Type  B1 $:=(a,a,b,b)$, Type B2 $:=(a,b,b,a)$, or Type B3 $:=(a,b,a,b)$. Some of these types are depicted in Figure \ref{FigureTypes}.

\begin{figure}[h]
\centering
\includegraphics[width=.82\textwidth, keepaspectratio]{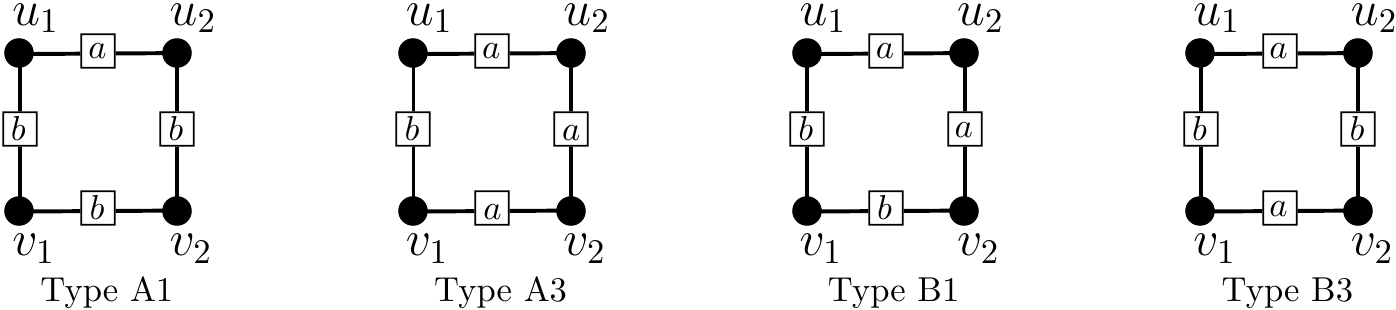}
\caption[Different lengths of the edges in $C$ from the proof of Lemma \ref{Lemma n,k even}]{Different lengths of the edges in $C$}
\label{FigureTypes}
\end{figure}

If $C$ is of Type A1, then $a\equiv\pm 3b\pmod{2n}$. This implies that the two endvertices of an edge of length $a$ are also endvertices of a $3$-path whose edges are all of length $b$. Also, the two endvertices of a $3$-path whose edges are all of length $b$ must be adjacent. Consider the edge $v_{2}v_{3}$. Since $v_{1}v_{2}$ and $u_{2}v_{2}$ have length $b$, the edge $v_{2}v_{3}$ has length $a$, and so it must belong to some $4$-cycle with the other three edges of the cycle having length $b$. We denote this $4$-cycle by $C_{4}(v_{2}v_{3})$. First, assume that $u_{2}u_{3}$ is of length $a$. If $u_{2}v_{2}\in E(C_{4}(v_{2}v_{3}))$, then, $E(C_{4}(v_{2}v_{3}))$ contains $u_{2}v_{2+k}$ and consequently $v_{2+k}v_{3}$, which is impossible, since $k\geq 4$. Therefore, $E(C_{4}(v_{2}v_{3}))$ contains $v_{1}v_{2}$, and so $C_{4}(v_{2}v_{3})=(v_{3},v_{2},v_{1},u_{1})$, implying that $k=2$, a contradiction. Consequently, $u_{2}u_{3}$ must be of length $b$, implying that $C_{4}(v_{2}v_{3})=(v_{3},v_{2},u_{2},u_{3})$. By using the same arguments we can deduce that the outer- and inner-cycle edges, and the vertical spokes in $G$ have lengths as shown in Figure \ref{FigureTypeA1}.

\begin{figure}[h]
\centering
\includegraphics[width=0.55\textwidth, keepaspectratio]{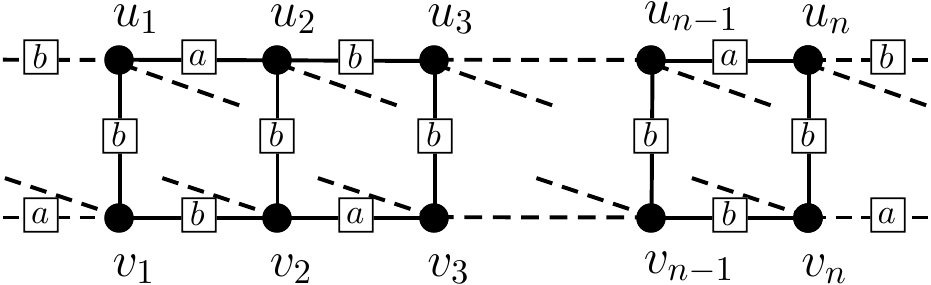}
\caption{$A[n,k]$ when $C$ is of Type A1}
\label{FigureTypeA1}
\end{figure}

Since $k$ is even, we also have $b\equiv\pm 3a\pmod{2n}$ (see for example the $4$-cycle $(u_{1},u_{2},\linebreak  v_{2+k},v_{1+k})$). This implies that $a\equiv\pm 9a\pmod{2n}$, that is, $8a\equiv 2n\pmod{2n}$, or $10a\equiv 2n\pmod{2n}$. Since $\gcd(2n,a)=1$, the total number of vertices of $G$ must be equal to $8$ or $10$, a contradiction, since $n\geq 8$.

If $C$ is of Type A2, then $b\equiv\pm 3a\pmod{2n}$. This implies that the two endvertices of an edge of length $b$ are also endvertices of a $3$-path whose edges are all of length $a$. Also, the two endvertices of a $3$-path whose edges are all of length $a$ must be adjacent. Consider the edge $u_{2}u_{3}$. Since $u_{1}u_{2}$ and $u_{2}v_{2}$ have length $a$, the edge $u_{2}u_{3}$ has length $b$, and so it must belong to some $4$-cycle with the other three edges of the cycle having length $a$. We denote this $4$-cycle by $C_{4}(u_{2}u_{3})$. First, assume that $v_{2}v_{3}$ is of length $b$. If $u_{2}v_{2}\in E(C_{4}(u_{2}u_{3}))$, then, $E(C_{4}(u_{2}u_{3}))$ contains $v_{2}u_{2-k}$ and consequently $u_{2-k}u_{3}$. This means that $u_{2-k}=u_{4}$, and so, since $u_{4}$ is adjacent to $v_{2}$ we have that $k\equiv-2\pmod{n}$. By Definition \ref{def AccordionGraphs}, this implies that $k=n-2\leq\frac{n}{2}$, a contradiction since $n\geq 8$. Therefore, $E(C_{4}(u_{2}u_{3}))$ contains $u_{1}u_{2}$, and so $C_{4}(u_{2}u_{3})=(u_{3},u_{2},u_{1},v_{1})$, implying once again that $k\equiv-2\pmod{n}$, a contradiction. Consequently, $v_{2}v_{3}$ must be of length $a$, implying that $C_{4}(u_{2}u_{3})=(u_{3},u_{2},v_{2},v_{3})$. By using the same arguments we can deduce that the outer- and inner-cycle edges, and the vertical spokes in $G$ have lengths as shown in Figure \ref{FigureTypeA2}.

\begin{figure}[h]
\centering
\includegraphics[width=0.55\textwidth, keepaspectratio]{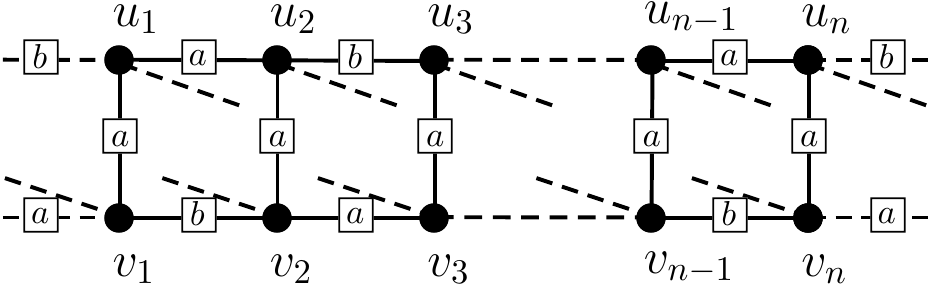}
\caption{$A[n,k]$ when $C$ is of Type A2}
\label{FigureTypeA2}
\end{figure}

Since $k$ is even, we also have $a\equiv\pm 3b\pmod{2n}$ (see for example the $4$-cycle $(u_{2},u_{3},\linebreak  v_{3+k},v_{2+k})$). This once again implies that $a\equiv\pm 9a\pmod{2n}$, a contradiction as in the case when $C$ is of Type A1.

So assume that $C$ is of Type A3. Then, $b\equiv\pm 3a\pmod{2n}$ and, in particular, the edge $u_{2}u_{3}$ has length $b$. Consequently, this edge must belong to some $4$-cycle with the other three edges of the cycle having length $a$. We denote this $4$-cycle by $C_{4}(u_{2}u_{3})$. Since $u_{1}u_{2}$ and $u_{2}v_{2}$ are both of length $a$, we have the following cases:
\begin{itemize}
\item if $C_{4}(u_{2}u_{3})=(u_{3},u_{2},u_{1},v_{1+k})$, then $k=2$, a contradiction;
\item if $C_{4}(u_{2}u_{3})=(u_{3},u_{2},u_{1},u_{n})$, then $n=4$, a contradiction; and
\item if $C_{4}(u_{2}u_{3})=(u_{3},u_{2},v_{2},v_{1})$, then $u_{3}$ is adjacent to $v_{1}$. Consequently, we have that $k\equiv -2\pmod n$, and as before, this implies that $n-2\leq\frac{n}{2}$, a contradiction, since $n\geq 8$.
\end{itemize}
Thus $C$ cannot be of Type A3, and by using a similar argument, it can be shown that $C$ cannot be of Type A4 either.
In fact, if $C$ is of Type A4, then, $b\equiv\pm 3a\pmod{2n}$ once again, and, in particular, the edge $u_{1}u_{n}$ has length $b$. Consequently, this edge must belong to some $4$-cycle with the other three edges of the cycle having length $a$. We denote this $4$-cycle by $C_{4}(u_{1}u_{n})$. Since $u_{1}u_{2}$ and $u_{1}v_{1}$ are both of length $a$, we have the following cases:
\begin{itemize}
\item if $C_{4}(u_{1}u_{n})=(u_{n},u_{1},v_{1},v_{2})$, then $k=2$, a contradiction;
\item if $C_{4}(u_{1}u_{n})=(u_{n},u_{1},u_{2},u_{3})$, then $n=4$, a contradiction; and
\item if $C_{4}(u_{1}u_{n})=(u_{n},u_{1},u_{2},v_{2+k})$, then $u_{n}$ is adjacent to $v_{2+k}$. Consequently, we have that $k\equiv -2\pmod n$, and as before, this implies that $n-2\leq\frac{n}{2}$, a contradiction, since $n\geq 8$.
\end{itemize}
Thus, $C$ cannot be of Type A4.
If $C$ is of Type B1 or Type B2, then we have that $2a\equiv\pm 2b\pmod{2n}$, and since $1\leq a<b\leq n-1$, we can further assume that $2a\equiv -2b\pmod{2n}$. Consequently, we have that $a+b=n$, and so, since $\gcd(2n,a)=\gcd(2n,b)=\gcd(2n,1)=\gcd(2n,n-1)=1$, by Lemma \ref{LemmaAn2}, $G\simeq A[n,2]$, a contradiction. Therefore, $C$ and  all other possible $4$-cycles in $G$ must be of Type B3, which is impossible, because then the edges having length $a$ would induce two disjoint $n$-cycles, contradicting the fact that the edges having length $a$ induce a Hamiltonian cycle (and thus a $2n$-cycle). As a consequence, $A[n,k]$ is not circulant, contradicting our initial assumption.
\end{proof}

Using the above two lemmas we can now prove the main result of this section.

\begin{theorem}\label{theorem accordion char circ}
The accordion graph $A[n,k]$ is not circulant if and only if both $n$ and $k$ are even, such that $k\geq 4$.
\end{theorem}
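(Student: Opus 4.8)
My plan is to split the biconditional into its two implications and route each through the lemmas already established. One direction is essentially free: if $n$ and $k$ are both even and $k\geq 4$, then $k\neq 2$, so Lemma \ref{Lemma n,k even} --- which says that for $n,k$ even the accordion $A[n,k]$ is circulant exactly when $k=2$ --- immediately gives that $A[n,k]$ is not circulant, and nothing more is needed here.

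For the converse I would argue contrapositively: assuming it is \emph{not} the case that $n$ and $k$ are both even with $k\geq 4$, I want to show $A[n,k]$ is circulant, and there are precisely two regimes. If $n$ and $k$ are both even, then from $0<k\leq\tfrac{n}{2}$ and $k<4$ we are forced to have $k=2$ (hence $n\geq 4$), so $A[n,k]=A[n,2]$ is circulant by Lemma \ref{LemmaAn2}. If instead at least one of $n$ and $k$ is odd, then at least one of $k$ and $n-k$ is odd, and I would produce an explicit isomorphism $A[n,k]\cong\textrm{Ci}[2n,\{2,\ell\}]$, where $\ell\in\{k,n-k\}$ is odd.

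The step I expect to carry the real weight is finding and checking this explicit isomorphism. The map I have in mind sends $u_i\mapsto x_{-2(i-1)}$ and $v_i\mapsto x_{-2(i-1)+m}$ (subscripts of $x$ read modulo $2n$), where $m$ is the odd residue of $k$ modulo $n$, that is, $m=k$ when $k$ is odd and $m=n+k$ when $k$ is even (in which case $n$ must be odd). The oddness of $m$ is exactly what forces this map to be a bijection onto the vertex set of $\textrm{Ci}[2n,\cdot]$; one then checks that under it every outer- and inner-cycle edge of $A[n,k]$ acquires length $2$, while both the vertical and the diagonal spokes acquire the single common length $\ell=\min(m,2n-m)$, which is odd and satisfies $1\leq\ell<n$ and $\ell\in\{k,n-k\}$. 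Hence $\ell\neq 2$ and $\gcd(2n,2,\ell)=1$, so $\textrm{Ci}[2n,\{2,\ell\}]$ is a connected simple quartic graph with exactly $4n$ edges, matching $A[n,k]$; an edge-preserving bijection between two graphs with equally many edges is an isomorphism, and the case is closed. The conceptual content here is light: the real work lies in hitting on the right vertex labelling --- in particular the parity-dependent shift $m$ --- and then pushing through the length computations modulo $2n$. The genuinely delicate analysis of this section has already been absorbed into Lemmas \ref{LemmaAn2} and \ref{Lemma n,k even}.
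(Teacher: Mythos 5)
Your proposal is correct and follows essentially the same route as the paper: the even--even case is dispatched by Lemma \ref{Lemma n,k even} (with Lemma \ref{LemmaAn2} for $k=2$), and when at least one of $n,k$ is odd you exhibit an explicit isomorphism onto $\textrm{Ci}[2n,\{2,\ell\}]$ with $\ell\in\{k,n-k\}$ odd, which is just a negated-and-shifted version of the paper's maps $u_i\mapsto x_{2i}$, $v_i\mapsto x_{2i-k}$ (resp.\ $x_{2i+n-k}$). The only cosmetic difference is that you unify the paper's Cases (i) and (ii) through the single parity-dependent shift $m$; the length computations and the edge-counting conclusion are the same.
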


\begin{proof} By Lemma \ref{Lemma n,k even}, it suffices to show that the accordion graph $A[n,k]$ is circulant if and only if either
\begin{enumerate}[(i)]
\item $k$ is odd, or
\item $k$ is even and $n$ is odd, or
\item $k=2$ and $n$ is even.
\end{enumerate}

\noindent\textbf{Case (i).}  For $k$ odd, we claim that the function $\phi:V(A[n,k])\rightarrow   V(\textrm{Ci}[2n,\{2,k\}])$ defined by $\phi: u_{i}\mapsto x_{2i}$ and $\phi: v_{i}\mapsto x_{2i-k\pmod{2n}}$, where $i\in [n]$, is an isomorphism. Since $2i-k$ is odd, for every $i\in[n]$, one can deduce that the function $\phi$ is bijective. Also, $\textrm{Ci}[2n,\{2,k\}]$ has the same number of edges as $A[n,k]$, and thus it suffices to show that an edge in $A[n,k]$ is mapped to an edge in $\textrm{Ci}[2n,\{2,k\}]$.

\begin{enumerate}[(a)]
\item We first take an edge $u_{i}u_{j}$ from the outer-cycle of $A[n,k]$, for some $i\in[n]$ and $j\equiv i+1\pmod{n}$, without loss of generality. Consider $\phi(u_{i})\phi(u_{j})$. The length of $\phi(u_{i})\phi(u_{j})$ can be calculated using $2(j-i)$ which is equivalent to $2\pmod{2n}$. Since this belongs to $\{\pm 2, \pm k\}$, $\phi (u_{i})\phi (u_{j})\in E(\textrm{Ci}[2n,\{2,k\}])$.

\item By a similar reasoning to that used in (a), $\phi(v_{i})\phi(v_{j})$ is an edge in $\textrm{Ci}[2n,\{2,k\}]$, for any $i\in[n]$ and $j\equiv i+1\pmod{n}$, without loss of generality.

\item We now consider the spokes. Let $i\in[n]$ and $j\equiv i+k\pmod{n}$. The length of $\phi(u_{i})\phi(v_{i})$ can be calculated using $2i-k-2i$, which is equal to $-k$. On the other hand, the length of $\phi(u_{i})\phi(v_{j})$ can be calculated using $2j-k-2i$, which is equal to $k$. In both cases, the lengths obtained belong to $\{\pm 2, \pm k\}$, and so $\phi(u_{i})\phi(v_{i})$ and $\phi(u_{i})\phi(v_{j})$ are edges in $\textrm{Ci}[2n,\{2,k\}]$.

\end{enumerate}

\noindent\textbf{Case (ii).} For $k$ even and $n$ odd, we claim that the following function $\phi:V(A[n,k])\rightarrow V(\textrm{Ci}[2n,\{2,n-k\}])$ defined by $\phi: u_{i}\mapsto x_{2i}$ and $\phi: v_{i}\mapsto x_{2i+n-k\pmod{2n}}$, where $i\in [n]$, is an isomorphism. As in Case (i), the function $\phi$ is bijective, since $n-k$ is odd. Moreover, $\textrm{Ci}[2n,\{2,n-k\}]$ has the same number of edges as $A[n,k]$, and so it suffices to show that an edge in $A[n,k]$ is mapped to an edge in $\textrm{Ci}[2n,\{2,n-k\}]$.

\begin{enumerate}[(a)]

\item By the same reasoning used in Case (i), $\phi(u_{i})\phi(u_{j})$ and $\phi(v_{i})\phi(v_{j})$ are edges in $\textrm{Ci}[2n,\{2,n-k\}]$, for $i\in[n]$, and $j\equiv i+1\pmod{n}$, without loss of generality.

\item We now consider the spokes. Let $i\in[n]$ and $j\equiv i+k\pmod{n}$. The length of $\phi(u_{i})\phi(v_{i})$ can be calculated using $2i+n-k-2i$, which is equal to $n-k$. On the other hand, the length of $\phi(u_{i})\phi(v_{j})$ can be calculated using $2j+n-k-2i$, which is equivalent to $-(n-k)\pmod{2n}$. In both cases, the lengths obtained belong to $\{\pm 2, \pm (n-k)\}$, and so $\phi(u_{i})\phi(v_{i})$ and $\phi(u_{i})\phi(v_{j})$ are edges in $\textrm{Ci}[2n,\{2,n-k\}]$.
\end{enumerate}
\noindent\textbf{Case (iii).} This was proven in Lemma \ref{LemmaAn2}.
\end{proof}

The following result follows immediately from the proof of Theorem \ref{theorem accordion char circ}.

\begin{corollary}
The accordion graph $A[n,k]$ is isomorphic to the circulant graph
\begin{enumerate}[(i)]
\item $\textrm{Ci}[2n,\{2,k\}]$, when $k$ is odd;
\item $\textrm{Ci}[2n,\{2,n-k\}]$, when $n$ is odd and $k$ is even; and
\item $\textrm{Ci}[2n,\{1,n-1\}]$, when $n$ is even and $k=2$.
\end{enumerate}
\end{corollary}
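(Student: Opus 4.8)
The plan is to reduce Theorem~\ref{theorem accordion char circ} to the three sufficiency cases (i)--(iii) listed in its statement, since the ``only if'' direction is handed to us directly by Lemma~\ref{Lemma n,k even}: if $A[n,k]$ is not circulant then $n,k$ are even with $k\geq 4$, and conversely Lemma~\ref{Lemma n,k even} shows that for $n,k$ even only $k=2$ yields a circulant. So everything hinges on producing, in each of the remaining cases, an explicit isomorphism from $A[n,k]$ onto a quartic circulant graph $\textrm{Ci}[2n,\{a,b\}]$ for a suitable choice of lengths $a,b$.

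The key steps, in order, would be: (1) Guess the right circulant. The natural idea is to interleave the $u$-vertices and $v$-vertices around a cycle of length $2n$, placing $u_i$ at an even position $2i$ and $v_i$ at an appropriate position of the opposite parity so that the inner- and outer-cycle edges both become length-$2$ edges. This forces the vertical and diagonal spokes to have the single remaining ``short'' length. When $k$ is odd, putting $v_i \mapsto x_{2i-k}$ works, giving $\textrm{Ci}[2n,\{2,k\}]$; when $k$ is even and $n$ is odd, $n-k$ is odd, so $v_i\mapsto x_{2i+n-k}$ works, giving $\textrm{Ci}[2n,\{2,n-k\}]$; case (iii) is already Lemma~\ref{LemmaAn2}. (2) Verify bijectivity: since the map sends $u_i$ to even-indexed vertices and $v_i$ to odd-indexed vertices (because $2i-k$ and $2i+n-k$ are odd), the two halves have disjoint images, and within each half the map is a bijection because $i\mapsto 2i$ is a bijection from $[n]$ onto the even residues mod $2n$. (3) Verify edges: since $|E(A[n,k])| = 4n = |E(\textrm{Ci}[2n,\{a,b\}])|$ and both are simple graphs, it suffices to check that each of the four edge-types of $A[n,k]$ --- outer-cycle $u_iu_{i+1}$, inner-cycle $v_iv_{i+1}$, vertical spoke $u_iv_i$, diagonal spoke $u_iv_{i+k}$ --- maps to an edge of length $\pm a$ or $\pm b$ modulo $2n$; these are short arithmetic checks ($2(j-i)\equiv \pm 2$ for cycle edges, and $\pm k$ or $\pm(n-k)$ for spokes).

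For the ``only if'' direction I would simply invoke Lemma~\ref{Lemma n,k even}: the lemma established that for $n$ and $k$ both even, $A[n,k]$ is circulant precisely when $k=2$, so the complement of the circulant case among all $(n,k)$ with $0<k\leq n/2$ is exactly $\{n,k \text{ both even}, k\geq 4\}$, which is the claimed non-circulant family. Thus the only thing to write out is the three explicit isomorphisms and their verifications, and one should emphasize that cases (i) and (ii) are disjoint-plus-(iii) covers all $(n,k)$ with $A[n,k]$ circulant: if $k$ is odd use (i); if $k$ is even then either $n$ is odd (use (ii)) or $n$ is even and then $k=2$ by Lemma~\ref{Lemma n,k even} (use (iii)).

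I do not expect a genuine obstacle here --- the content and difficulty of the theorem lives entirely inside Lemma~\ref{Lemma n,k even}, whose proof is the long Type~A1--B3 case analysis. The only mild subtlety in writing the present proof is bookkeeping: making sure the modular reductions in the edge checks are done correctly (e.g.\ that $2(i+1)-2i = 2$ stays $2$ even when $i+1$ wraps around to $1$, since $2n\equiv 0$), and making sure the case split ``$k$ odd / $k$ even'' is exhaustive once combined with Lemma~\ref{Lemma n,k even}. The ``hard part,'' such as it is, is purely presentational: organizing the three isomorphisms so the reader sees immediately that together with Lemma~\ref{Lemma n,k even} they cover every circulant accordion exactly once.
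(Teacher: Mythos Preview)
Your proposal is correct and matches the paper's approach essentially verbatim: the paper proves the corollary simply by pointing back to the explicit isomorphisms constructed in the proof of Theorem~\ref{theorem accordion char circ}, which are precisely the maps $u_i\mapsto x_{2i}$, $v_i\mapsto x_{2i-k}$ (for $k$ odd), $u_i\mapsto x_{2i}$, $v_i\mapsto x_{2i+n-k}$ (for $n$ odd, $k$ even), and Lemma~\ref{LemmaAn2} (for $n$ even, $k=2$) that you describe. The only remark is that the corollary itself asserts only the three isomorphisms, so your discussion of the ``only if'' direction via Lemma~\ref{Lemma n,k even} is superfluous here --- that belongs to the theorem, not the corollary.
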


\section{Concluding remarks and open problems} \label{Sect-Concl}

Despite ruling out all accordion graphs $A[n,k]$ having $\gcd(n,k)\geq 5$, a complete characterisation of which accordion graphs have the PMH- or the PH-property is definitely of interest but still inaccessible. In Section \ref{Sect-PH}, partial results were obtained for the cases when $\gcd(n,k)\leq 2$. These are portrayed in Table \ref{table acc pmh} together with other partial results obtained by a computer check conducted through Wolfram Mathematica. In particular, we identify which accordions are PMH and which are not, for $1\leq k\leq 10$ and for $n\leq 21$. We remark that some values of $n$ and $k$ are marked as ``unknown" due to problems with computation time and memory.

\begin{table}[h]
\centering
\begin{tabular}{ccccccccccccccl}
\addlinespace[-\aboverulesep]
\cmidrule[\heavyrulewidth]{1-12}%\toprule
\multicolumn{2}{l}{$A[n,k]$}      & \multicolumn{9}{c}{$k$}                   \\
\cmidrule{1-12}
      &       & $1$    & $2$   & $3$    & $4$    & $5$    & $6$    & $7$    & $8$    & $9$&$10$&&&\\
\multirow{19}[0]{*}{$n$}
&$3$		  & \cellcolor[gray]{0.85}  & \cellcolor[gray]{0}  & \cellcolor[gray]{0}  & \cellcolor[gray]{0}  & \cellcolor[gray]{0}  & \cellcolor[gray]{0}  & \cellcolor[gray]{0}  & \cellcolor[gray]{0}  & \cellcolor[gray]{0}& \cellcolor[gray]{0} & & & \\
&$4$		  & \cellcolor[gray]{0.85}  & \cellcolor[gray]{0.85}  & \cellcolor[gray]{0}  & \cellcolor[gray]{0}  & \cellcolor[gray]{0}  & \cellcolor[gray]{0}  & \cellcolor[gray]{0}  & \cellcolor[gray]{0}  & \cellcolor[gray]{0}& \cellcolor[gray]{0} &&& \\
&$5$		  & \cellcolor[gray]{0.85}  & \cellcolor[gray]{0.85}  & \cellcolor[gray]{0}  & \cellcolor[gray]{0}  & \cellcolor[gray]{0}  & \cellcolor[gray]{0}  & \cellcolor[gray]{0}  & \cellcolor[gray]{0}  & \cellcolor[gray]{0}& \cellcolor[gray]{0} &&& \\
&$6$		  & \cellcolor[gray]{0.85}  & \cellcolor[gray]{0.85}  & \cellcolor[gray]{0.85}  & \cellcolor[gray]{0}  & \cellcolor[gray]{0}  & \cellcolor[gray]{0}  & \cellcolor[gray]{0}  & \cellcolor[gray]{0}  & \cellcolor[gray]{0}& \cellcolor[gray]{0} &&& \\
&$7$          & \cellcolor[gray]{0.85}  & \cellcolor[gray]{0.85}  & \cellcolor[gray]{0.85}  & \cellcolor[gray]{0}  & \cellcolor[gray]{0}  & \cellcolor[gray]{0}  & \cellcolor[gray]{0}  & \cellcolor[gray]{0}  & \cellcolor[gray]{0}& \cellcolor[gray]{0} &&& \\
&$8$          & \cellcolor[gray]{0.85}  & \cellcolor[gray]{0.85}  & \cellcolor[gray]{0.85}  & \cellcolor[gray]{0.85}  & \cellcolor[gray]{0}  & \cellcolor[gray]{0}  & \cellcolor[gray]{0}  & \cellcolor[gray]{0}  & \cellcolor[gray]{0}& \cellcolor[gray]{0} &&& \\
&$9$          & \cellcolor[gray]{0.85}  & \cellcolor[gray]{0.85}  & \cellcolor[gray]{0.85}  & \cellcolor[gray]{0.85}  & \cellcolor[gray]{0}  & \cellcolor[gray]{0}  & \cellcolor[gray]{0}  & \cellcolor[gray]{0}  & \cellcolor[gray]{0}& \cellcolor[gray]{0} &&& \\
&$10$         & \cellcolor[gray]{0.85}  & \cellcolor[gray]{0.85}  & \cellcolor[gray]{0.85}  & \cellcolor[gray]{0.85}  & $\bot$ & \cellcolor[gray]{0}  & \cellcolor[gray]{0}  & \cellcolor[gray]{0}  & \cellcolor[gray]{0}& \cellcolor[gray]{0} &&& \\
&$11$         & \cellcolor[gray]{0.85}  & \cellcolor[gray]{0.85}  & \cellcolor[gray]{0.85}  & \cellcolor[gray]{0.85}  & \cellcolor[gray]{0.85}  & \cellcolor[gray]{0}  & \cellcolor[gray]{0}  & \cellcolor[gray]{0}  & \cellcolor[gray]{0}& \cellcolor[gray]{0} &&\cellcolor[gray]{0.85} & $A[n,k]$ PMH \\
&$12$         & \cellcolor[gray]{0.85}  & \cellcolor[gray]{0.85}  & \cellcolor[gray]{0.85}  & \cellcolor[gray]{0.85}  & \cellcolor[gray]{0.85}  & $\bot$ & \cellcolor[gray]{0}  & \cellcolor[gray]{0}  & \cellcolor[gray]{0}& \cellcolor[gray]{0} &&$\bot$ & $A[n,k]$ not PMH \\
&$13$         & \cellcolor[gray]{0.85}  & \cellcolor[gray]{0.85}  & $\bot$ & \cellcolor[gray]{0.85}  & \cellcolor[gray]{0.85}  & \cellcolor[gray]{0.85}  & \cellcolor[gray]{0}  & \cellcolor[gray]{0}  & \cellcolor[gray]{0}& \cellcolor[gray]{0} &&?&Unknown \\
&$14$         & \cellcolor[gray]{0.85}  & \cellcolor[gray]{0.85}  & $\bot$ & \cellcolor[gray]{0.85}  & \cellcolor[gray]{0.85}  & \cellcolor[gray]{0.85}  & $\bot$ & \cellcolor[gray]{0}  & \cellcolor[gray]{0}& \cellcolor[gray]{0} &&& \\
&$15$         & \cellcolor[gray]{0.85}  & \cellcolor[gray]{0.85}  & $\bot$ & $\bot$ & $\bot$ & \cellcolor[gray]{0.85}  & \cellcolor[gray]{0.85}  & \cellcolor[gray]{0}  & \cellcolor[gray]{0}& \cellcolor[gray]{0} &&& \\
&$16$         & \cellcolor[gray]{0.85}  & \cellcolor[gray]{0.85}  & $\bot$ & \cellcolor[gray]{0.85}  & \cellcolor[gray]{0.85}  & \cellcolor[gray]{0.85}  & $\bot$ & $\bot$ & \cellcolor[gray]{0}& \cellcolor[gray]{0} &&& \\
&$17$         & \cellcolor[gray]{0.85}  & \cellcolor[gray]{0.85}  & $\bot$ & $\bot$ & $\bot$ & $\bot$ & \cellcolor[gray]{0.85}  & \cellcolor[gray]{0.85}  & \cellcolor[gray]{0}  & \cellcolor[gray]{0}&&& \\
&$18$         & \cellcolor[gray]{0.85}  & \cellcolor[gray]{0.85}  & $\bot$ & \cellcolor[gray]{0.85}  & $\bot$ & $\bot$ & $\bot$ & ?      & $\bot$& \cellcolor[gray]{0} &&& \\
&$19$         & \cellcolor[gray]{0.85}  & \cellcolor[gray]{0.85}  & $\bot$ & $\bot$ & $\bot$ & $\bot$ & $\bot$ & $\bot$ & $\bot$& \cellcolor[gray]{0} &&& \\
&$20$         & \cellcolor[gray]{0.85}  & \cellcolor[gray]{0.85}  & $\bot$ & ?      & $\bot$ & ?      & $\bot$ & ?      & $\bot$& $\bot$ &&& \\
&$21$         & \cellcolor[gray]{0.85}  & \cellcolor[gray]{0.85}  & $\bot$ & $\bot$ & $\bot$ & $\bot$ & $\bot$      & ?      & ?& ?  &&&
\end{tabular}
\caption{Which accordions are PMH for $3\leq n\leq 21$ and $1\leq k\leq 10$}
\label{table acc pmh}
\end{table}

Additionally, as already remarked before, the main result in \cite{CpCqCirculant} gives more than just all the possible values of $n_{1}$ and $n_{2}$, for which $C_{n_{1}}\square C_{n_{2}}$ is circulant. In fact, the main result of the above paper is the following.

\begin{theorem}\cite{CpCqCirculant}
The circulant graph $\textrm{Ci}[n',\{a_{1},a_{2}\}]$ is isomorphic to $C_{n_{1}}\square C_{n_{2}}$ if and only if:
\begin{enumerate}[(i)]
\item $n'=n_1n_2$,
\item $n_1=\gcd(n',a_j)$ and $n_2=\gcd(n',a_{3-j})$, where $j=1$ or $j=2$, and
\item $\gcd(n_1,n_2)=1$.
\end{enumerate}
\end{theorem}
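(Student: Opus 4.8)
The plan is to prove both implications separately, with the ``if'' direction being the clean computational core via the Chinese Remainder Theorem, and the ``only if'' direction carrying the structural weight. For the ``if'' direction, assume (i)--(iii) hold and, without loss of generality, that $n_1=\gcd(n',a_1)$ and $n_2=\gcd(n',a_2)$. Since $\gcd(n_1,n_2)=1$ and $n'=n_1n_2$, the Chinese Remainder Theorem supplies a group isomorphism $\psi:\mathbb{Z}_{n'}\to\mathbb{Z}_{n_1}\times\mathbb{Z}_{n_2}$, $i\mapsto(i\bmod n_1,\,i\bmod n_2)$. The hypothesis $\gcd(n',a_1)=n_1$ forces $n_1\mid a_1$ with $\gcd(a_1/n_1,n_2)=1$, so $\psi(a_1)=(0,w)$ where $w:=a_1\bmod n_2$ is a unit of $\mathbb{Z}_{n_2}$; symmetrically $\psi(a_2)=(u,0)$ with $u:=a_2\bmod n_1$ a unit of $\mathbb{Z}_{n_1}$. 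Post-composing $\psi$ with the coordinatewise unit scaling $(s,t)\mapsto(u^{-1}s,\,w^{-1}t)$ produces a bijection $\Phi$ carrying the connection set $\{\pm a_1,\pm a_2\}$ exactly onto $\{(0,\pm1),(\pm1,0)\}$. As $\Phi$ is a bijection of the common vertex group transporting one connection set onto the other, it is a graph isomorphism $\textrm{Ci}[n',\{a_1,a_2\}]\cong C_{n_1}\square C_{n_2}$.

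For the ``only if'' direction, suppose $\textrm{Ci}[n',\{a_1,a_2\}]\cong C_{n_1}\square C_{n_2}$ with $n_1,n_2\geq 3$. Condition (i) is immediate from counting vertices. For (ii) and (iii) I would use that a circulant graph is a Cayley graph of $\mathbb{Z}_{n'}$, hence admits the rotation $\rho:x_i\mapsto x_{i+1}$ as an automorphism generating a regular cyclic subgroup of order $n'$ in $\operatorname{Aut}(C_{n_1}\square C_{n_2})$. Invoking the unique prime factorisation of connected graphs under the Cartesian product (Sabidussi--Vizing) together with the description $\operatorname{Aut}(C_{n_1}\square C_{n_2})=D_{n_1}\times D_{n_2}$ (extended by a factor-swap when $n_1=n_2$), I would write $\rho=(\sigma,\tau)$ acting coordinatewise with $\sigma\in\operatorname{Aut}(C_{n_1})$ and $\tau\in\operatorname{Aut}(C_{n_2})$. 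Because $\operatorname{ord}(\rho)=\operatorname{lcm}(\operatorname{ord}\sigma,\operatorname{ord}\tau)$ must equal $n_1n_2$ while $\operatorname{ord}\sigma\mid n_1$ and $\operatorname{ord}\tau\mid n_2$ (reflections contribute only order $2$), both $\sigma$ and $\tau$ are forced to be full rotations of coprime orders $n_1$ and $n_2$; this yields $\gcd(n_1,n_2)=1$, i.e.\ (iii), and identifies $\mathbb{Z}_{n'}$ with $\mathbb{Z}_{n_1}\times\mathbb{Z}_{n_2}$ via CRT. Under this identification the two edge-length classes become the $\rho$-invariant axis-parallel $2$-factors, and the length of their cycles (equivalently, the order of the generator annihilating one coordinate) evaluates to $\gcd(n',a_j)\in\{n_1,n_2\}$, which is precisely (ii).

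The hard part will be the small and degenerate cases that this structural argument glosses over. The identity $\operatorname{Aut}(C_{n_1}\square C_{n_2})=D_{n_1}\times D_{n_2}$ fails exactly when a factor is $C_4\cong K_2\square K_2$ (destroying primeness) or when the factors coincide or are otherwise too symmetric (e.g.\ $C_3\square C_3$), and in those cases the edge-length $2$-factors of the circulant need not coincide with the canonical row/column decomposition, so the decomposition $\rho=(\sigma,\tau)$ is not automatic. I would dispose of these by a direct analysis of the $4$-cycles through a fixed vertex --- the same device used in the proof of Lemma~\ref{Lemma n,k even} --- classifying the short relations $2a_i\equiv 0$, $2a_1\equiv\pm 2a_2$, and $3a_i\equiv\pm a_{3-i}$ modulo $n'$ to rule out any regular cyclic subgroup when $\gcd(n_1,n_2)>1$ and to pin down the generators otherwise, and then verifying the finitely many genuinely exceptional products by hand.
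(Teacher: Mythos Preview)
The paper does not prove this theorem at all: it is quoted verbatim from Bogdanowicz~\cite{CpCqCirculant} in the concluding section as motivation for a related open problem, and no argument for it appears anywhere in the text. There is therefore nothing in the paper to compare your proposal against.

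That said, your sketch is broadly sound. The ``if'' direction via CRT and unit scaling is clean and correct. For the ``only if'' direction, the automorphism-group argument is the natural line, and you have correctly isolated the obstructions: the Sabidussi--Vizing description of $\operatorname{Aut}(C_{n_1}\square C_{n_2})$ as $D_{n_1}\times D_{n_2}$ breaks down precisely when a factor is $C_4\cong K_2\square K_2$ (so the prime factorisation changes) or when $n_1=n_2$ (so a factor-swap enters). Your plan to patch these cases by local $4$-cycle analysis in the spirit of Lemma~\ref{Lemma n,k even} is plausible, but as written it is only a promise, not an argument; in particular, you have not actually shown that no regular cyclic subgroup survives in, say, $\operatorname{Aut}(C_4\square C_4)$ or $\operatorname{Aut}(C_4\square C_8)$, and these are exactly the places where a purported proof can silently fail. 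If you intend this as a self-contained proof you will need to carry those computations through; if you are happy to cite Bogdanowicz, as the paper does, then none of this is needed.
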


In this sense, we think that it would be an interesting endeavour to give a necessary and sufficient condition for a quartic circulant graph to be isomorphic to some accordion.

%\appendix
%\section{The PMH-property in small accordion graphs}\label{appendix}

\end{document}